\documentclass[a4papet,10pt]{article}
\usepackage{amsfonts,amssymb,mathtools}
\usepackage{graphicx}
\usepackage{amsmath}
\usepackage[latin1]{inputenc} 
\usepackage{bbm}
\usepackage{pstricks}
\usepackage{psfrag}
\usepackage{appendix}
\usepackage{amsthm}
\usepackage{dsfont}
\usepackage{stackrel}
\usepackage{eufrak}
\usepackage{mathrsfs}
\usepackage{enumerate}
\usepackage{cite}
\usepackage{chngcntr}
\counterwithin{figure}{section}
\usepackage[a4paper,top=30mm,bottom=30mm,left=35mm,right=35mm]{geometry}
\numberwithin{equation}{section}

\newcommand{\prob}{\mathbb{P}}
\newcommand{\Ex}{\mathbb{E}}
\newcommand{\Rl}{\mathbb{R}}

\newcommand{\cl}{\mbox{cl}\,}

\newcommand{\Is}{I_{\mathrm{s}}}

\newcommand{\Ii}{I_{\mathrm{i}}}

\newcommand{\ells}{\ell_{\mathrm{s}}}

\newcommand{\elli}{\ell_{\mathrm{i}}}
\newcommand{\rew}{\mathcal{X}}
\newcommand{\Brew}{\mathcal{B}(\mathcal{X})}
\newcommand{\BWrew}{\mathcal{B}(\Rl\times\mathcal{X})}

\newtheorem{theorem}{Theorem}[section]
\newtheorem{proposition}{Proposition}[section]
\newtheorem{lemma}{Lemma}[section]

\title{Large deviation principles for renewal-reward processes}

\author{Marco Zamparo\footnote{Dipartimento di Fisica, Universit\`a degli Studi di Bari and INFN, Sezione di Bari,
    via Amendola 173, \phantom{aaz} 70126 Bari, Italy
      \newline \phantom{aaz} E-mail: \texttt{marco.zamparo@uniba.it}}}

\date{}

\begin{document}  
\maketitle

\begin{abstract}
We establish a sharp large deviation principle for renewal-reward
processes, supposing that each renewal involves a broad-sense reward
taking values in a real separable Banach space. In fact, we
demonstrate a weak large deviation principle without assuming any
exponential moment condition on the law of waiting times and rewards
by resorting to a sharp version of Cram\'er's theorem. We also exhibit
sufficient conditions for exponential tightness of renewal-reward
processes, which leads to a full large deviation principle.\\

\noindent Keywords: Large deviations; Cram\'er's theorem;
Renewal processes; Renewal-reward processes;
Banach space valued random variables\\

\noindent Mathematics Subject Classification 2020: 60F10; 60K05; 60K35
\end{abstract}

\section{Main results}

{\it Renewal models} are widespread tools of probability that find
application in Queueing Theory \cite{AsmussenBook}, Insurance
\cite{DicksonBook}, Finance \cite{RSSTBook}, and Statistical Physics
\cite{GiacominBook} among others.  A renewal model describes some
event that occurs at the {\it renewal times} $T_1,T_2,\ldots$
involving the {\it rewards} $X_1,X_2,\ldots$ respectively. If
$S_1,S_2,\ldots$ denote the {\it waiting times} for a new occurrence
of the event, then the renewal time $T_i$ can be expressed for each
$i\ge 1$ in terms of the waiting times as
$T_i=S_1+\cdots+S_i$. Through this paper we assume that the waiting
time and reward pairs $(S_1,X_1),(S_2,X_2),\ldots$ form an independent
and identically distributed sequence of random vectors on a
probability space $(\Omega,\mathcal{F},\prob)$, the waiting times
taking positive real values and the rewards taking values in a real
separable Banach space $(\rew,\|{\cdot}\|)$ equipped with the Borel
$\sigma$-field $\Brew$.  Any dependence between $S_i$ and $X_i$ is
allowed and we can suppose without restriction that
$\lim_{i\uparrow\infty}T_i(\omega)=+\infty$ for all $\omega\in\Omega$.
The {\it cumulative reward} by the time $t\ge 0$ is the random
variable $W_t:=\sum_{i\ge 1}X_i\mathds{1}_{\{T_i\le t\}}$, which is
measurable because $\rew$ is separable \cite{Talagrand}.  The
stochastic process $t\mapsto W_t$ is the so-called {\it renewal-reward
  process} or {\it compound renewal process}, which plays an important
role in applications
\cite{AsmussenBook,DicksonBook,RSSTBook,GiacominBook}. The strong law
of large numbers holds for a renewal-reward process under the optimal
hypotheses $\Ex[S_1]<+\infty$ and $\Ex[\|X_1\|]<+\infty$, $\Ex$ being
expectation with respect to the law $\prob$, and can be proved by
combining standard arguments of renewal theory \cite{AsmussenBook}
with the classical strong law of large numbers of Kolmogorov in
separable Banach spaces \cite{Talagrand}.  This paper aims to
characterize the fluctuations of the cumulative reward $W_t$ as $t$
goes to infinity by means of large deviation bounds.

\subsection{Large deviation bounds}
\label{sec:mainres}

The \textit{Cram\'er's rate function} of waiting time
and reward pairs is the function $J$ that maps each
$(s,w)\in\Rl\times\rew$ in the extended real number
\begin{equation*}
J(s,w):=\sup_{(\zeta,\varphi)\in\Rl\times\rew^\star}\Big\{s\zeta+\varphi(w)-\ln\Ex\big[e^{\zeta S_1+\varphi(X_1)}\big]\Big\}.
\end{equation*}
Hereafter $\rew^\star$ denotes the topological dual of $\rew$, which
is understood as a Banach space with the norm induced by
$\|{\cdot}\|$.  In this paper a special role is played by the function
$\inf_{\gamma>0}\{\gamma J(\cdot/\gamma,\cdot/\gamma)\}$, whose
lower-semicontinuous regularization $\Upsilon$ associates every
$(\beta,w)\in\Rl\times\rew$ with
\begin{equation*}
  \Upsilon(\beta,w):=\adjustlimits\lim_{\delta\downarrow 0}\inf_{s\in (\beta-\delta,\beta+\delta)}\inf_{v\in B_{w,\delta}}\inf_{\gamma>0}
  \big\{\gamma J(s/\gamma,v/\gamma)\big\},
\end{equation*}
$B_{w,\delta}:=\{v\in\rew:\|v-w\|<\delta\}$ being the open ball of
center $w$ and radius $\delta$. Setting
$\elli:=-\liminf_{s\uparrow+\infty}(1/s)\ln\prob[S_1>s]$ and
$\ells:=-\limsup_{s\uparrow+\infty}(1/s)\ln\prob[S_1>s]$ and observing
that $0\le\ells\le\elli\le+\infty$, we make use of $\Upsilon$ to build
two rate functions $\Ii$ and $\Is$ on $\rew$ according to the formulas
\begin{equation*}
  \Ii:=\begin{cases}
  \inf_{\beta\in[0,1]}\{\Upsilon(\beta,\cdot\,)+(1-\beta)\elli\} & \mbox{if }\elli<+\infty,\\
  \Upsilon(1,\cdot\,) & \mbox{if }\elli=+\infty
  \end{cases}
\end{equation*}
and
\begin{equation*}
  \Is:=\begin{cases}
  \inf_{\beta\in[0,1]}\{\Upsilon(\beta,\cdot\,)+(1-\beta)\ells\} & \mbox{if }\ells<+\infty,\\
  \Upsilon(1,\cdot\,) & \mbox{if }\ells=+\infty.
  \end{cases}
\end{equation*}
The rate functions $\Ii$ and $\Is$ enter into a lower large deviation
bound and an upper large deviation bound, respectively, as stated by
the following theorem which collects the main results of the paper.
\begin{theorem}
\label{mainth}
The following conclusions hold:
\begin{enumerate}[{\upshape(a)}]
\item the rate functions $\Ii$ and $\Is$ are lower
  semicontinuous and convex;
\item if $G\subseteq\rew$ is open, then
\begin{equation*}
\liminf_{t\uparrow+\infty}\frac{1}{t}\ln\prob\bigg[\frac{W_t}{t}\in G\bigg]\ge -\inf_{w\in G}\big\{\Ii(w)\big\};
\end{equation*}
\item if $F\subseteq\rew$ is compact, then
\begin{equation*}
\limsup_{t\uparrow+\infty}\frac{1}{t}\ln\prob\bigg[\frac{W_t}{t}\in F\bigg]\le -\inf_{w\in F}\big\{\Is(w)\big\};
\end{equation*}

\item if $F\in\Brew$ is open convex, closed convex, or just convex
  when $\rew$ is finite-dimensional, then the bound of part {\upshape
    (c)} is valid whenever $\ells<+\infty$ or $\Is(0)<+\infty$;

\item if $\rew$ is finite-dimensional and $\Ex[e^{\zeta
    S_1+\sigma\|X_1\|}]<+\infty$ for some numbers $\zeta\le 0$ and
  $\sigma>0$, then $\Ii$ and $\Is$ have compact level sets and the
  bound of part {\upshape (c)} is valid for any closed set $F$;

\item if $\rew$ is infinite-dimensional and $\Ex[e^{\sigma
    S_1+\sigma\|X_1\|}]<+\infty$ for all $\sigma>0$, then
  $\Ii=\Is=\Upsilon(1,\cdot\,)$, $\Upsilon(1,\cdot\,)$ has compact
  level sets, and the bound of part {\upshape (c)} is valid for any
  closed set $F$.
\end{enumerate}
\end{theorem}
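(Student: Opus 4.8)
\emph{Proof plan.} The idea is to transfer to the cumulative reward the sharp, moment-free version of Cram\'er's theorem for the i.i.d.\ sequence of pairs $(S_i,X_i)_{i\ge1}$ in $\Rl\times\rew$, which provides a \emph{weak} large deviation principle for the empirical means $\frac1n\sum_{i=1}^n(S_i,X_i)$ with rate function $J$. The bridge will be the renewal count $N_t:=\sup\{n\ge0:T_n\le t\}$: since $W_t=\sum_{i=1}^{N_t}X_i$ and $\{N_t=n\}=\{T_n\le t\}\cap\{S_{n+1}>t-T_n\}$, the event $\{W_t/t\in A\}$ is the disjoint union over $n\ge0$ of $\{\frac1n\sum_{i=1}^nX_i\in\frac tn A\}\cap\{T_n\le t\}\cap\{S_{n+1}>t-T_n\}$. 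Setting $n=\lfloor\gamma t\rfloor$ and using that $(S_{n+1},X_{n+1})$ is independent of $\mathcal{F}_n:=\sigma((S_i,X_i):i\le n)$, I expect the $n$-th term to behave, at exponential scale, like the product of a Cram\'er factor $e^{-\gamma t\,J(s/\gamma,w/\gamma)}$ (from $\frac1n\sum_{i=1}^nS_i\approx s/\gamma$ with $s=T_n/t\approx\beta$ and $\frac1n\sum_{i=1}^nX_i\approx w/\gamma$) and a waiting-time tail $\prob[S_1>t-T_n]$. Since $\frac1t\ln\prob[S_1>ut]=u\cdot\frac1{ut}\ln\prob[S_1>ut]$ and letting $t\uparrow+\infty$ sweeps the whole tail $ut\uparrow+\infty$, one has $\liminf_{t\uparrow+\infty}\frac1t\ln\prob[S_1>ut]=-u\,\elli$ and $\limsup_{t\uparrow+\infty}\frac1t\ln\prob[S_1>ut]=-u\,\ells$ for every $u>0$; as $t-T_n\approx(1-\beta)t$, the tail contributes $e^{-(1-\beta)t\,\elli}$ from below and $e^{-(1-\beta)t\,\ells}$ from above, and this is exactly what will make $\Ii$ and $\Is$ differ.

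\emph{Parts (a) and (b).} For (a), $J$ is convex and lower semicontinuous as a supremum of affine functions of $(s,w)$; its perspective $(s,w,\gamma)\mapsto\gamma J(s/\gamma,w/\gamma)$ is jointly convex on $(\Rl\times\rew)\times(0,\infty)$, so $\inf_{\gamma>0}\{\gamma J(\cdot/\gamma,\cdot/\gamma)\}$ and its lower semicontinuous regularization $\Upsilon$ are convex; then, $(1-\beta)\elli$ being affine and $[0,1]$ convex and compact, $\Ii$ and $\Is$ are partial minimizations over $\beta$ of jointly convex, jointly lower semicontinuous functions, hence convex and lower semicontinuous (the cases $\elli=+\infty$, $\ells=+\infty$ being immediate). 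For the lower bound (b) it suffices to prove, for every $w\in G$ and every $\gamma>0$, $\beta\in[0,1]$, $v$ near $w$ and $s$ near $\beta$, that the $\liminf$ in question is at least $-\gamma J(s/\gamma,v/\gamma)-(1-\beta)\elli$: I would restrict the disjoint union above to the single index $n=\lfloor\gamma t\rfloor$, impose $\frac1n\sum_{i=1}^nS_i$ near $s/\gamma$ with $(\beta-\delta)t<T_n<\beta t$, $\frac1n\sum_{i=1}^nX_i\in B_{v/\gamma,\delta}$, and $S_{n+1}>t-T_n$, and combine the Cram\'er weak lower bound for the pair with the tail liminf above; optimizing over this construction reproduces $\Upsilon(\beta,w)+(1-\beta)\elli$, hence $\Ii(w)$, the case $\elli=+\infty$ being handled by forcing $\beta=1$, i.e.\ $T_n\le t$ together with $S_{n+1}>0$.

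\emph{Part (c).} I would write $\prob[W_t/t\in F]\le\sum_{n\ge0}\prob[W_t/t\in F,\,N_t=n]$ and split the sum at $n=\lfloor Mt\rfloor$. For $n\ge Mt$ the crude bound $\prob[T_n\le t]\le e^{\zeta t}\,\Ex[e^{-\zeta S_1}]^n$, valid for any $\zeta>0$ with $\Ex[e^{-\zeta S_1}]<1$, makes the tail of the series smaller than $e^{-Lt}$ for any prescribed $L$ once $M$ is large. For $n<Mt$, conditioning on $\mathcal{F}_n$ rewrites the $n$-th term as $\Ex\big[\mathds{1}_{\{W_t/t\in F,\;T_n\le t\}}\,\phi(t-T_n)\big]$ with $\phi(u):=\prob[S_1>u]$ non-increasing; partitioning $\{T_n/t\in[\beta,\beta+\delta)\}$, covering the compact $F$ by finitely many balls, and combining the Cram\'er weak upper bound on the resulting bounded constraint sets with $\phi(t-T_n)\le\phi((1-\beta-\delta)t)$ and the tail limsup above, I expect the $n$-th term to be at most $\exp\!\big(-t\inf\{\gamma J(s/\gamma,v/\gamma)+(1-\beta)\ells\}+o(t)\big)$. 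Summing the subexponentially many surviving terms, letting $M\uparrow+\infty$ then $\delta\downarrow0$, and invoking the lower semicontinuous regularization built into $\Upsilon$ should produce $-\inf_{w\in F}\Is(w)$.

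\emph{Parts (d)--(f).} These upgrade the compact upper bound. For (d), the convexity of $\Is$ from (a) allows the classical reduction of the upper bound for a convex target to the upper bounds for its supporting closed half-spaces, each of which is a one-dimensional estimate for the real renewal-reward process $\varphi(W_t)=\sum_{i\ge1}\varphi(X_i)\mathds{1}_{\{T_i\le t\}}$ obtainable by the exponential Chebyshev inequality; the hypothesis $\ells<+\infty$ or $\Is(0)<+\infty$ is what keeps this reduction effective. Parts (e) and (f) instead establish \emph{exponential tightness} of $(W_t/t)_{t>0}$, which together with (b)--(c) extends the bound of (c) to every closed set. In the finite-dimensional case (e) I would dominate $\|W_t\|\le\sum_{i\ge1}\|X_i\|\,\mathds{1}_{\{T_i\le t\}}=:\overline{W}_t$ and use the renewal inequality $m(t)\le\Ex[e^{\lambda\|X_1\|}m(t-S_1)\mathds{1}_{\{S_1\le t\}}]+\prob[S_1>t]$ for $m(t):=\Ex[e^{\lambda\overline{W}_t}]$: under $\Ex[e^{\zeta S_1+\sigma\|X_1\|}]<+\infty$ one can fix $\lambda\in(0,\sigma]$ and then $C$ so large that $\Ex[e^{\lambda\|X_1\|-CS_1}]<1$, forcing $m(t)\le c\,e^{Ct}$ and $\prob[\|W_t\|/t>R]\le c\,e^{-t(\lambda R-C)}$, which is exponential tightness because closed balls are compact; the same moment hypothesis (after possibly decreasing $\zeta$) makes $\Ii$ and $\Is$ at least linear in $w$, hence possessed of compact level sets. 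In the infinite-dimensional case (f) the hypothesis $\Ex[e^{\sigma S_1+\sigma\|X_1\|}]<+\infty$ for all $\sigma>0$ forces $\elli=\ells=+\infty$, so $\Ii=\Is=\Upsilon(1,\cdot\,)$ by the defining formulas, and, through the full Cram\'er principle in a separable Banach space, makes $\frac1n\sum_{i=1}^nX_i$ and $\frac1n\sum_{i=1}^nS_i$ exponentially tight; combined with the exponential concentration of $N_t/t$ forced by $\ells=+\infty$, this should yield exponential tightness of $W_t/t$ and goodness of $\Upsilon(1,\cdot\,)$. The hard part, throughout, will be the last-renewal coupling: carrying a pair-level \emph{weak} principle through a sum over the unbounded, $t$-dependent range of $n$ uniformly in $t$, while tracking the overshoot $S_{N_t+1}$ so that $\elli$, $\ells$, and the lower semicontinuous regularization in $\Upsilon$ all land in exactly the right places.
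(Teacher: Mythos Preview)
Your overall architecture is right, and parts (a), (e), (f) are close to the paper's arguments (the paper proves convexity of $\Upsilon$ via Sion's minimax theorem rather than the perspective function, and proves exponential tightness in (e) by a direct dual-basis Chernoff bound rather than a renewal inequality for $\Ex[e^{\lambda\overline{W}_t}]$, but these are minor differences). There are, however, two genuine gaps.

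\textbf{Lower bound when $\elli=+\infty$.} Your sentence ``the case $\elli=+\infty$ being handled by forcing $\beta=1$, i.e.\ $T_n\le t$ together with $S_{n+1}>0$'' does not work. With a single index $n=\lfloor\gamma t\rfloor$ and $T_n$ constrained to lie in $((1-\delta)t,t)$, you still need $S_{n+1}>t-T_n$, and the best available lower bound for that is $\prob[S_1>\delta t]$, which contributes $-\delta\cdot\elli=-\infty$ to the $\liminf$. The paper avoids the overshoot tail entirely in this case: it sums over a \emph{range} $p_t\le n<q_t$ with $q_t-p_t\asymp\epsilon t$, conditions on $\{S_i\ge m,\ \|X_i\|\le M\}$ for $p_t<i\le q_t$ (an event of probability $\ge 2^{-(q_t-p_t)}$), and observes that under this conditioning one of the $n$ in the range must satisfy $T_n\le t<T_{n+1}$ deterministically. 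This is the missing idea.

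\textbf{Upper bound: non-asymptotic convex estimate.} In part (c) you plan to cover $F$ by balls and apply ``the Cram\'er weak upper bound on the resulting bounded constraint sets''. Two problems: first, in infinite dimensions balls are not compact, so the weak upper bound does not apply; second, even when the sets are compact, the weak LDP gives an \emph{asymptotic} bound as $n\to\infty$, whereas you are summing over $1\le n<Mt$ and need a bound valid for \emph{each} $n$. The paper's key device here is a non-asymptotic estimate (its Lemma~\ref{lem:aux2}, resting on the super-multiplicativity lemma for convex sets in the appendix): for $C$ open convex, closed convex, or any convex set when $\dim\rew<\infty$, one has $\ln\prob\big[\frac1t\sum_{i=1}^n(S_i,X_i)\in[\alpha,\beta]\times C\big]\le -t\inf_{[\alpha,\beta]\times C}\Upsilon$ for \emph{all} $n$ and $t$. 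This is what makes the sum over $n$ controllable and is also why the paper proves part (d) \emph{before} part (c): the convex bound is established first (Lemma~\ref{lem:aux3}), then localized to balls (Lemma~\ref{lem:compact_aux}), then used to cover compacts. Your half-space reduction for (d) is a different route that would need a separate justification of the role of the hypotheses $\ells<+\infty$ or $\Is(0)<+\infty$; in the paper these enter because the term $\mathds{1}_{\{0\in C\}}\prob[S_1>t]$ in the basic decomposition must be beaten, and that requires either $\ells<+\infty$ or $\lambda<\Upsilon(1,0)=\Is(0)$.
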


Theorem \ref{mainth} is proved in section \ref{sec:mainproof}. When
$\Ii=\Is$ the theorem establishes, through the lower large deviation
bound for open sets of part (b) and the upper large deviation bound
for compact sets of part (c), a {\it weak large deviation principle}
with {\it rate function} $\Ii=\Is$ for the renewal-reward process
$t\mapsto W_t$. We refer to \cite{DemboBook} for the language of large
deviation theory.  Part (d) states that the upper large deviation
bound also holds for open and closed convex sets provided that
$\ells<+\infty$ or $\Is(0)<+\infty$. It fails in general when
$\ells=+\infty$ and $\Is(0)=+\infty$, as we shall show in section
\ref{sec:mainproof} by means of two examples. We stress that no
assumption on the law of waiting time and reward pairs is made to
deduce parts (a), (b), (c), and (d) of theorem \ref{mainth}. Some
assumption is instead necessary for exponential tightness of the
distribution of the scaled cumulative reward $W_t/t$, which leads to a
full large deviation principle where the large deviation upper bound
is valid for all closed sets, and not only for those that are compact.
If $\Ii=\Is$ and $\rew$ is finite-dimensional, then part (e)
establishes a {\it full large deviation principle} with {\it good rate
  function} $\Ii=\Is$ under the exponential moment condition
$\Ex[e^{\zeta S_1+\sigma\|X_1\|}]<+\infty$ for some numbers $\zeta\le
0$ and $\sigma>0$. We recall that a rate function is ``good'' when it
has compact level sets.  Part (f) states that the same is true when
$\rew$ is infinite-dimensional and $\Ex[e^{\sigma
    S_1+\sigma\|X_1\|}]<+\infty$ for all $\sigma>0$. Obviously, we
have $\Ii=\Is$ if $\elli=\ells$ as expected in most applications. We
have $\Ii=\Is$ even if $\elli>\ells$ but rewards are dominated by
waiting times according to the following proposition, whose proof is
reported in appendix \ref{proof:ratewait}.
\begin{proposition}
\label{prop:ratewait}
Assume that there exists a positive real function $f$ on $[0,+\infty)$
  such that $\lim_{s\uparrow+\infty}f(s)/s=0$ and $\|X_1\|\le f(S_1)$
  with full probability. Then $\Ii=\Is=\Upsilon(1,\cdot\,)$.
\end{proposition}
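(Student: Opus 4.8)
The plan is to show that the hypothesis $\|X_1\|\le f(S_1)$ with $f(s)/s\to 0$ forces the contribution of the "residual" waiting-time parameter $\beta$ in the definition of $\Ii$ and $\Is$ to be trivial, so that both rate functions collapse onto $\Upsilon(1,\cdot\,)$. Concretely, I would argue that whenever $\beta\in[0,1)$ and $w\neq 0$ the quantity $\Upsilon(\beta,w)$ is already $+\infty$, while for $w=0$ one has $\Upsilon(0,0)=0$ and the extra term $(1-\beta)\elli$ (resp. $(1-\beta)\ells$) cannot lower the infimum below $\Upsilon(1,0)$. Since $\Ii\le\Upsilon(1,\cdot\,)$ and $\Is\le\Upsilon(1,\cdot\,)$ hold trivially (take $\beta=1$ in the infimum, or use the second branch when $\elli=+\infty$ or $\ells=+\infty$), the real content is the reverse inequality $\Upsilon(1,\cdot\,)\le\Ii$, and because $\ells\le\elli$ gives $\Is\le\Ii$, it suffices to prove $\Upsilon(1,\cdot\,)\le\Is$.

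The first step is to understand how the support constraint on $(S_1,X_1)$ propagates to $J$ and then to the rescaled family $\gamma J(\cdot/\gamma,\cdot/\gamma)$. Because $\|X_1\|\le f(S_1)$ almost surely, the law of $(S_1,X_1)$ is supported in the set $\{(s,x):\|x\|\le f(s)\}$, and a standard convex-duality argument shows $J(s,w)=+\infty$ unless $\|w\|\le f(s)$ (one tests the supremum defining $J$ with $\varphi$ a large multiple of a norming functional for $w$ and $\zeta=0$, using $\ln\Ex[e^{\varphi(X_1)}]\le\sup_{s}f(s)\cdot\|\varphi\|$... more carefully: $\ln\Ex[e^{\varphi(X_1)}]\le \Ex[\,\|\varphi\|\,\|X_1\|\,]$ is false in general, so instead bound $\varphi(X_1)\le\|\varphi\|f(S_1)$ and combine with $\zeta$ to control $\ln\Ex[e^{\zeta S_1+\varphi(X_1)}]$). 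Thus $\gamma J(s/\gamma,v/\gamma)<+\infty$ requires $\|v/\gamma\|\le f(s/\gamma)$, i.e. $\|v\|\le\gamma f(s/\gamma)$. Now fix $(\beta,w)$ with $\beta\in[0,1)$: in the infimum defining $\Upsilon(\beta,w)$ one has $s$ ranging near $\beta$ and $\gamma>0$ free, but for the inner value to be finite we need $\|v\|\le\gamma f(s/\gamma)$ with $v$ near $w$ and $s$ near $\beta$. The second step is to rule this out when $w\neq0$: if $\gamma$ stays bounded then $\gamma f(s/\gamma)$ is bounded (as $s/\gamma$ is then bounded below away from $0$ only if... actually $s\to\beta$ so $s/\gamma\to\beta/\gamma$ which is finite, and $f$ is finite on $[0,\infty)$, so this is bounded) — wait, one must be careful that $f$ need only be finite-valued, not bounded. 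Let me restructure: the key observation is that $\gamma f(s/\gamma)=(s)\cdot\frac{f(s/\gamma)}{s/\gamma}\to 0$ as $\gamma\downarrow 0$ with $s$ near $\beta>0$, and also $\gamma f(s/\gamma)\to 0$ as $\gamma\uparrow\infty$ (since $s/\gamma\to0$ and $f$ positive with... hmm, $f$ near $0$ need not be small). The cleanest route is: for $\beta>0$, $\inf_{\gamma>0}\gamma f(\beta/\gamma)$; substituting $u=\beta/\gamma$ gives $\beta\inf_{u>0}f(u)/u=0$ by hypothesis, so the constraint $\|v\|\le\gamma f(s/\gamma)$ can only be satisfied for $\|v\|$ arbitrarily small, forcing $\Upsilon(\beta,w)=+\infty$ for $w\neq0$ and $\beta\in(0,1)$; the case $\beta=0$ is handled similarly or by lower semicontinuity.

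The third step assembles the pieces. For $w\neq0$: in the formula $\Is=\inf_{\beta\in[0,1]}\{\Upsilon(\beta,w)+(1-\beta)\ells\}$ the only $\beta$ contributing a finite value is $\beta=1$, giving $\Is(w)=\Upsilon(1,w)$; the same holds for $\Ii(w)$, and when $\ells$ or $\elli$ is $+\infty$ the identity is immediate from the definition. For $w=0$: one checks $\Upsilon(\beta,0)$ is nondecreasing-enough in $\beta$, or more directly that $\Upsilon(0,0)=0$ forces $\Upsilon(1,0)\le\ells$ and $\le\elli$ (this is exactly the kind of relation that must already be implicit in Theorem \ref{mainth}(a)'s convexity, or provable from subadditivity of $\gamma\mapsto\gamma J(\cdot/\gamma,\cdot/\gamma)$), whence $\inf_{\beta}\{\Upsilon(\beta,0)+(1-\beta)\ells\}$ is attained at $\beta=1$ as well, giving $\Is(0)=\Upsilon(1,0)=\Ii(0)$. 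The main obstacle I anticipate is precisely this $w=0$ endpoint: showing that the affine-in-$\beta$ term $(1-\beta)\ells$ cannot beat $\Upsilon(1,0)$ requires a comparison $\Upsilon(1,0)\le\Upsilon(\beta,0)+(1-\beta)\ells$, which should follow from a convexity/superadditivity property of $\Upsilon$ together with the elementary bound $\Upsilon(0,0)=0\le\ells$ — but extracting that cleanly from the lower-semicontinuous-regularization definition of $\Upsilon$ will take the bulk of the work; everything involving $w\neq0$ is a soft support argument.
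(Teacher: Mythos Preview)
Your central claim --- that $\Upsilon(\beta,w)=+\infty$ for every $\beta\in(0,1)$ and $w\neq 0$ --- is false, and this breaks the argument. The positive homogeneity in lemma~\ref{lem:Upsilon_prop}(ii) gives $\Upsilon(\beta,w)=\beta\,\Upsilon(1,w/\beta)$ for $\beta>0$, so $\Upsilon(\beta,w)$ is finite exactly when $\Upsilon(1,w/\beta)$ is, and the latter is finite on a nontrivial set in any nondegenerate model. Your derivation of the false claim goes wrong at the quantifier: from $\inf_{\gamma>0}\gamma f(\beta/\gamma)=0$ you conclude that ``the constraint $\|v\|\le\gamma f(s/\gamma)$ can only be satisfied for $\|v\|$ arbitrarily small,'' but to force $\inf_{\gamma>0}\gamma J(s/\gamma,v/\gamma)=+\infty$ you would need $\gamma f(s/\gamma)<\|v\|$ for \emph{every} $\gamma$, not for some. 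In fact as $\gamma\uparrow+\infty$ one typically has $\gamma f(s/\gamma)\to+\infty$ (e.g.\ whenever $f$ is bounded away from zero near the origin), so the constraint is satisfiable for large $\gamma$. Even the preliminary assertion ``$J(s,w)=+\infty$ unless $\|w\|\le f(s)$'' is not correct in general, since $J$ is finite on the closed convex hull of the support of $(S_1,X_1)$ and the set $\{(s,x):\|x\|\le f(s)\}$ need not be convex.

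The paper's proof takes a completely different route and never tries to make $\Upsilon(\beta,\cdot)$ blow up off the origin. It works at the level of the log--moment generating function: from $\|X_1\|\le f(S_1)$ and $f(s)/s\to 0$ one gets $\Ex[e^{\zeta S_1+\varphi(X_1)}]=+\infty$ for every $\zeta>\ells$ and every $\varphi\in\rew^\star$. Hence the supremum defining $J$ may be restricted to $\zeta\le\ells$, and for such $\zeta$ and any $s\le 1$ the elementary inequality $s\zeta\ge\zeta+(s-1)\ells$ yields
\[
\gamma J(s/\gamma,w/\gamma)\ge\gamma J(1/\gamma,w/\gamma)+(s-1)\ells,
\]
whence $\Upsilon(\beta,w)+(1-\beta)\ells\ge\Upsilon(1,w)$ for all $\beta\in[0,1]$, which is exactly $\Is\ge\Upsilon(1,\cdot)$. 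No case split between $w=0$ and $w\neq 0$ is needed.
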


\subsection{Discussion}
\label{sec:discussion}

Large deviation principles (LDPs) for renewal-reward processes have
been investigated by many authors over the past decades. Their
attention has been focused mostly on rewards taking real values and an
almost omnipresent hypothesis of previous works is the Cram\'er
condition on the law of waiting time and reward pairs: $\Ex[e^{\sigma
    S_1+\sigma\|X_1\|}]<+\infty$ for some number $\sigma>0$.

The simplest example of renewal-reward process has unit rewards and
corresponds to the counting renewal process $t\mapsto N_t:=\sum_{i\ge
  1}\mathds{1}_{\{T_i\le t\}}$. Glynn and Whitt \cite{Glynn1994}
investigated the connection between LDPs of the inverse processes
$t\mapsto N_t$ and $i\mapsto T_i$, providing a full LDP for $N_t$
under the Cram\'er condition. This condition was later relaxed by
Duffield and Whitt \cite{Duffield1998}. Jiang \cite{Jiang1994} studied
the large deviations of the extended counting renewal process
$t\mapsto\sum_{i\ge 1}\mathds{1}_{\{T_i\le i^\alpha t\}}$ with
$\alpha\in[0,1)$ under the Cram\'er condition.  Glynn and Whitt
  \cite{Glynn1994} and Duffield and Whitt \cite{Duffield1998},
  together with Puhalskii and Whitt \cite{Puhalskii1997}, also
  investigated the connection between sample-path LDPs of the
  processes $t\mapsto N_t$ and $i\mapsto T_i$ under the Cram\'er
  condition.

Starting from sample-path LDPs of inverse and compound processes,
Duffy and Rodgers-Lee \cite{Duffy2004} sketched a full LDP for
renewal-reward processes with real rewards by means of the contraction
principle under the stringent exponential moment condition
$\Ex[e^{\sigma S_1+\sigma\|X_1\|}]<+\infty$ for all $\sigma>0$. Some
full LDPs for real renewal-reward processes were later proposed by
Macci \cite{Macci2005,Macci2007} under existence and essentially
smoothness of the scaled cumulant generating function, which allow for
an application of the G\"artner-Ellis theorem \cite{DemboBook}.
Essentially smoothness of the scaled cumulant generating function has
been recently relaxed by Borovkov and Mogulskii
\cite{Borovkov2015,Borovkov2019}, which used the Cram\'er's theorem
\cite{DemboBook} to establish a full LDP under the Cram\'er
condition. Under this condition, they
\cite{Borovkov2016_1,Borovkov2016_2,Borovkov2016_3} have also obtained
sample-path LDPs for real renewal-reward processes.

A different approach based on empirical measures has been considered
by Lefevere, Mariani, and Zambotti \cite{Lefevere2011}, which have
investigated large deviations for the empirical measures of forward
and backward recurrence times associated with a renewal process, and
have then derived by contraction a full LDP for renewal-rewards
processes with rewards determined by the waiting times: $X_i:=f(S_i)$
for each $i$ with a bounded and continuous real function $f$. Later,
Mariani and Zambotti \cite{Mariani2014} have developed a renewal
version of Sanov's theorem by studying the empirical law of rewards
that take values in a generic Polish space under the hypothesis
$\Ex[e^{\sigma S_1}]<+\infty$ for all $\sigma>0$. By appealing to the
contraction principle, this result could give a full LDP for a 
renewal-reward process with rewards valued in a separable
Banach space provided that the exponential moment condition
$\Ex[e^{\sigma\|X_1\|}]<+\infty$ is satisfied for all $\sigma>0$ as
discussed by Schied \cite{Schied1998}.

These works leave open the question of whether some LDPs free from
exponential moment conditions can be established for renewal-reward
processes, in the wake of the sharp version of Cram\'er's theorem
demonstrated by Bahadur and Zabell \cite{BaZa}.  In a recent paper,
the author \cite{Marco} has dropped the Cram\'er condition in the
discrete-time framework, whereby waiting times have a lattice
distribution, by establishing a weak LDP for cumulative rewards free
from hypotheses.  The discrete-time framework is special because
allows a super-multiplicativity property of the probability that
$W_t/t$ belongs to a convex set to emerge by conditioning on the event
that the time $t$ is a renewal time. This super-multiplicativity
property was the key to get at sharp LDPs in
\cite{Marco}. Unfortunately, the same strategy does not extend to
waiting times with non-lattice distribution since conditioning on the
event that a certain time is a renewal time is not a meaningful
procedure in this case. The present paper overcomes the difficulty to
deal with general waiting times by making a better use of Cram\'er's
theorem than Borovkov and Mogulskii
\cite{Borovkov2015,Borovkov2019}. In fact, starting from the
Cram\'er's theory for waiting time and reward pairs, here we establish
a weak LDP for the renewal-reward process $t\mapsto W_t$ with no
restriction on waiting times and without assuming that the Cram\'er
condition is satisfied. Moreover, when finite-dimensional rewards are
considered, we provide a full LDP under the exponential moment
condition $\Ex[e^{\zeta S_1+\sigma\|X_1\|}]<+\infty$ for some numbers
$\zeta\le 0$ and $\sigma>0$, which is weaker than the Cram\'er
condition $\Ex[e^{\sigma S_1+\sigma\|X_1\|}]<+\infty$ for some
$\sigma>0$.  For instance, rewards that define macroscopic observables
in applications to Statistical Physics \cite{Models} are of the order
of magnitude of waiting times and always satisfy our weak exponential
moment condition, whereas in general they do not fulfill the Cram\'er
condition. But after all is said and done, a super-multiplicativity
argument is still the key, as it underlies the sharp version of
Cram\'er's theorem we have exploited to reach our results.

To conclude, we point out that, at variance with Borovkov and
Mogulskii, we propose optimal lower and upper large deviation bounds
with possibly different rate functions in order to even address
situations where the tail of the waiting time distribution is very
oscillating. For instance, a physical renewal model giving rise to two
possibly different rate functions has been found by Lefevere, Mariani,
and Zambotti \cite{Lefevere2011_2,Lefevere2012} in the description of
a free particle interacting with a heat bath.

\section{Proof of theorem \ref{mainth}}
\label{sec:mainproof}

The proof of theorem \ref{mainth} is organized as follows.  In section
\ref{sec:ratefunctions} we discuss some properties of $\Upsilon$ and
prove lower semicontinuity and convexity of the rate functions,
verifying part (a) of theorem \ref{mainth}. Section \ref{sec:opensets}
demonstrates the lower large deviation bound for open sets, thus
proving part (b) of theorem \ref{mainth}. The upper large deviation
bound for convex sets, that is part (d) of theorem \ref{mainth}, is
proved in section \ref{sec:convexsets}. In this section we also
exhibit examples with open and closed convex sets that demonstrate how
relaxing the hypotheses $\ells<+\infty$ and $\Is(0)<+\infty$ at the
same time leads to violate this bound.  Part (d) is used to prove part
(c) in section \ref{sec:compactsets}. Finally, the proof of parts (e)
and (f) of theorem \ref{mainth} are reported in section
\ref{sec:closedsets}.  The elements of Cram\'er's theory for waiting
time and reward pairs on which the proof is based are collected in
appendix \ref{Cramer}.

\subsection{Rate functions}
\label{sec:ratefunctions}

The function $\Upsilon$ satisfies the following properties, which will
be used in the sequel.
\begin{lemma}
  \label{lem:Upsilon_prop}
The following conclusions hold:
\begin{enumerate}[(i)]
\item $\Upsilon$ is lower semicontinuous and  convex;
\item $\Upsilon(\beta,w)\ge 0$ and
  $\Upsilon(a\beta,aw)=a\Upsilon(\beta,w)$ for all $\beta\in\Rl$,
  $w\in\rew$, and $a>0$;
\item $\Upsilon(0,0)=0$ and $\Upsilon(\beta,w)=+\infty$ for all $\beta<0$ and $w\in\rew$;
\item for every $\beta>0$ and $w\in\rew$
\begin{equation*}
  \Upsilon(\beta,w)=\adjustlimits\lim_{\delta\downarrow 0}\inf_{v\in B_{w,\delta}}\inf_{\gamma>0}\big\{\gamma J(\beta/\gamma,v/\gamma)\big\}.
\end{equation*}
\end{enumerate}
\end{lemma}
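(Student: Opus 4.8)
The plan is to work from the Cramér rate function $J$ and its elementary properties—$J\ge 0$, $J$ convex and lower semicontinuous, $J(s,w)=+\infty$ whenever $s\le 0$ (since $S_1>0$ a.s., so $\zeta\to-\infty$ makes the supremum blow up for $s\le 0$), and $J(0,0)=0$ if one normalizes so that $\ln\Ex[1]=0$—all of which are recorded in the appendix on Cramér's theory. Write $K(\beta,w):=\inf_{\gamma>0}\{\gamma J(\beta/\gamma,v/\gamma)\}$ evaluated at $v=w$; this is the positively homogeneous "conical hull" of $J$ in the sense that its epigraph is the cone generated by the epigraph of $J$ (with apex at the origin). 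Then $\Upsilon$ is by definition the lower-semicontinuous regularization of $K$, i.e. $\Upsilon(\beta,w)=\adjustlimits\lim_{\delta\downarrow0}\inf_{s\in(\beta-\delta,\beta+\delta)}\inf_{v\in B_{w,\delta}}K(s,v)$, so $\Upsilon=\mathrm{lsc}\,K$ and $\Upsilon\le K$.

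For part (i): lower semicontinuity is immediate from the definition as an lsc regularization. For convexity, the cleanest route is to show $K$ is convex first. Since $J$ is convex and $J(s,w)=+\infty$ for $s\le0$, a standard fact about the positively homogeneous hull (perspective-type construction) gives that $K$, being the infimal value over $\gamma>0$ of the convex perspective function $(\gamma,s,v)\mapsto\gamma J(s/\gamma,v/\gamma)$ partially minimized over $\gamma$, is itself convex; equivalently, $\mathrm{epi}\,K$ is the convex cone generated by $\mathrm{epi}\,J$. Then the lsc regularization of a convex function is convex, so $\Upsilon$ is convex. Part (ii): nonnegativity $\Upsilon\ge0$ follows from $K\ge0$, which holds because $J\ge0$; positive homogeneity $\Upsilon(a\beta,aw)=a\Upsilon(\beta,w)$ follows because $K$ is positively homogeneous (replace $\gamma$ by $a\gamma$ in the infimum) and the lsc regularization of a positively homogeneous function remains positively homogeneous—here one must be a little careful and verify that rescaling the $\delta$-neighborhoods by $a$ is harmless, which it is since $a>0$ is fixed.

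Part (iii): $\Upsilon(0,0)=0$ follows from $0\le\Upsilon(0,0)\le K(0,0)\le\liminf_{\gamma\downarrow0}\gamma J(0,0)=0$ using $J(0,0)=0$; actually one should double-check the direction of the limit and instead bound $K(0,0)\le\gamma J(0,0)=0$ for every $\gamma>0$. For $\beta<0$: if $s<0$ then $J(s/\gamma,v/\gamma)=+\infty$ for all $\gamma>0$ and all $v$, so $K(s,v)=+\infty$ whenever $s<0$; hence for $\beta<0$, taking $\delta<|\beta|$ forces the infimum over $s\in(\beta-\delta,\beta+\delta)$ to range only over $s<0$, giving $\Upsilon(\beta,w)=+\infty$. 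Part (iv): this says that for $\beta>0$ the outer infimum over $s\in(\beta-\delta,\beta+\delta)$ is redundant. The inequality $\Upsilon(\beta,w)\ge\lim_{\delta\downarrow0}\inf_{v\in B_{w,\delta}}K(\beta,v)$ is clear (drop the $s$-infimum, keep $s=\beta$). For the reverse, exploit positive homogeneity: given $s$ near $\beta>0$ and $v$ near $w$, the point $(\beta,(\beta/s)v)$ lies near $(\beta,w)$ when $\delta$ is small (here $\beta>0$ is crucial so $\beta/s$ stays bounded), and $K(\beta,(\beta/s)v)=(\beta/s)K(s,v)$; choosing $\gamma'=(\beta/s)\gamma$ in the defining infimum for $K(\beta,\cdot)$ shows $K(\beta,(\beta/s)v)\le(\beta/s)K(s,v)$ with $(\beta/s)\to1$, which lets one absorb the $s$-infimum into an enlarged $v$-neighborhood.

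The main obstacle is part (iv)—more precisely, making the change-of-variables-plus-homogeneity argument fully rigorous when some values are $+\infty$ and when $K$ may fail to be lsc before regularization, so that one is juggling $\liminf$s over shrinking neighborhoods. Care is needed that the map $v\mapsto(\beta/s)v$ sends $B_{w,\delta}$ into $B_{w,\delta'}$ with $\delta'\to0$ as $\delta\to0$, uniformly in $s\in(\beta-\delta,\beta+\delta)$, which uses both $\beta>0$ and $\|w\|<\infty$. Everything else is a routine unwinding of the definitions together with the convex-analysis fact that the lsc (or convex, or positively homogeneous) hull inherits the corresponding property.
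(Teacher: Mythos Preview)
Your approach to part (i) via the perspective function is a genuine alternative to the paper's route. The paper instead shows that $\inf_{\gamma\in[1/k,k]}\{\gamma J(\cdot/\gamma,\cdot/\gamma)\}$ is a Fenchel conjugate by applying Sion's minimax theorem to swap the infimum over $\gamma\in[1/k,k]$ with the supremum defining $J$, and then sends $k\to\infty$. Your argument---that the perspective $(\gamma,s,v)\mapsto\gamma J(s/\gamma,v/\gamma)$ is jointly convex on $\{\gamma>0\}$ and that infimal projection preserves convexity---is more elementary and equally valid.

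There is, however, a genuine gap in part (iii). You assert $J(0,0)=0$, but this is false: since $S_1>0$ a.s., taking $\varphi=0$ and sending $\zeta\to-\infty$ gives $\Ex[e^{\zeta S_1}]\to 0$ by dominated convergence, so $-\ln\Ex[e^{\zeta S_1}]\to+\infty$ and hence $J(0,0)=+\infty$. (You in fact wrote $J(s,w)=+\infty$ for all $s\le 0$ a few lines earlier.) Consequently $K(0,0)=\inf_{\gamma>0}\gamma J(0,0)=+\infty$, and the chain $\Upsilon(0,0)\le K(0,0)=0$ collapses. The inequality $\Upsilon(0,0)\le 0$ genuinely requires the lower-semicontinuous regularization: the paper finds a point $(s_o,w_o)$ with $J(s_o,w_o)<+\infty$ (which exists because the log-moment-generating function is proper convex and hence admits an affine minorant), then approaches $(0,0)$ along the ray through $(s_o,w_o)$, using $K(a\delta s_o,a\delta w_o)\le a\delta\, J(s_o,w_o)\to 0$ as $\delta\downarrow 0$ for a fixed small $a>0$.

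A smaller slip occurs in part (iv): you have the two inequalities labeled the wrong way round. Restricting the outer infimum to the single value $s=\beta$ yields the \emph{upper} bound $\Upsilon(\beta,w)\le\lim_{\delta\downarrow0}\inf_{v\in B_{w,\delta}}K(\beta,v)$ (the infimum over fewer points is larger), which is the trivial direction; the homogeneity argument you give for the ``reverse'' actually supplies the \emph{lower} bound, the nontrivial direction. The substance of that argument is correct and matches the paper's proof of (iv) exactly---rescale $(s,v)$ to $(\beta,(\beta/s)v)$, absorb the distortion into an enlarged $v$-neighbourhood, and use $\beta/s\to 1$---but the bookkeeping of which inequality is which needs to be swapped.
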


\begin{proof}
As $\Upsilon$ is lower semicontinuous by construction, in order to
prove part $(i)$ it suffices to verify convexity. We show that, for
every given integer $k\ge 1$, the function
$\inf_{\gamma\in[1/k,k]}\{\gamma J(\cdot/\gamma,\cdot/\gamma)\}$ over
$\Rl\times\rew$ is the convex conjugate of a certain function $F_k$ on
$\Rl\times\rew^\star$. This way, $\inf_{\gamma>0}\{\gamma
J(\cdot/\gamma,\cdot/\gamma)\}=\lim_{k\uparrow\infty}\inf_{\gamma\in[1/k,k]}\{\gamma
J(\cdot/\gamma,\cdot/\gamma)\}$ is convex, and so is $\Upsilon$. Pick
$s\in\Rl$ and $w\in\rew$ and denote by $\mathcal{D}$ the set
$\{(\zeta,\varphi)\in\Rl\times\rew^\star:\Ex[e^{\zeta
    S_1+\varphi(w)}]<+\infty\}$.  The function that associates
$(\zeta,\varphi)\in\mathcal{D}$ with $\ln\Ex[e^{\zeta
    S_1+\varphi(X_1)}]$ is lower semicontinuous by Fatou's lemma and
convex, so that the real function that maps
$(\gamma,\zeta,\varphi)\in[1/k,k]\times\mathcal{D}$ in
$\varphi(w)-s\zeta-\gamma\ln\Ex[e^{\zeta S_1+\varphi(X_1)}]$ is
concave and upper semicontinuous with respect to $(\zeta,\varphi)$ for
each fixed $\gamma\in[1/k,k]$ and convex and continuous with respect
to $\gamma$ for each fixed pair $(\zeta,\varphi)\in\mathcal{D}$. Then,
the compactness of the interval $[1/k,k]$ allows an application of
Sion's minimax theorem to get
\begin{align}
  \nonumber
  \inf_{\gamma\in[1/k,k]}\big\{\gamma J(s/\gamma,w/\gamma)\big\}&=\adjustlimits\inf_{\gamma\in[1/k,k]}\sup_{(\zeta,\varphi)\in\mathcal{D}}\Big\{s\zeta+\varphi(w)-
  \gamma\ln\Ex\big[e^{\zeta S_1+\varphi(X_1)}\big]\Big\}\\
  \nonumber
  &=\adjustlimits\sup_{(\zeta,\varphi)\in\mathcal{D}}\inf_{\gamma\in[1/k,k]}\Big\{s\zeta+\varphi(w)-
  \gamma\ln\Ex\big[e^{\zeta S_1+\varphi(X_1)}\big]\Big\}\\
  \nonumber
  &=\sup_{(\zeta,\varphi)\in\Rl\times\rew^\star}\Big\{s\zeta+\varphi(w)-F_k(\zeta,\varphi)\Big\}
\end{align}
with
\begin{equation*}
F_k(\zeta,\varphi):=\max\Big\{(1/k)\ln\Ex\big[e^{\zeta S_1+\varphi(X_1)}\big],k\ln\Ex\big[e^{\zeta S_1+\varphi(X_1)}\big]\Big\}
\end{equation*}
for all $\zeta\in\Rl$ and $\varphi\in\rew^\star$.

Let us move to part $(ii)$. We have $J(s,w)\ge 0$ for all $s\in\Rl$
and $w\in\rew$ because $s\zeta+\varphi(w)-\ln\Ex[e^{\zeta
    S_1+\varphi(X_1)}]=0$ when $\zeta=0$ and $\varphi=0$. Then,
$\Upsilon(\beta,w)\ge 0$ for all $\beta$ and $w$. The property
$\Upsilon(a\beta,aw)=a\Upsilon(\beta,w)$ for all $\beta\in\Rl$,
$w\in\rew$, and $a>0$ is immediate.

Regarding part $(iii)$, let us observe that
$\lim_{\zeta\downarrow-\infty}\{s\zeta+\varphi(w)-\ln\Ex[e^{\zeta
    S_1+\varphi(X_1)}]\}=+\infty$ for each $s<0$ and
$\varphi\in\rew^\star$. This way, $J(s,w)=+\infty$ for each $s<0$ and
$w\in\rew$, which yields $\Upsilon(\beta,w)=+\infty$ for every
$\beta<0$ and $w\in\rew$. As far as the equality $\Upsilon(0,0)=0$ is
concerned, in the light of part $(ii)$ it remains to demonstrate that
$\Upsilon(0,0)\le 0$.  The function that maps
$(\zeta,\varphi)\in\Rl\times\rew^\star$ with $\ln\Ex[e^{\zeta
    S_1+\varphi(X_1)}]$ is proper convex, so that there exist
$s_o\in\Rl$, $w_o\in\rew$, and a constant $c$ such that
$\ln\Ex[e^{\zeta S_1+\varphi(X_1)}]\ge s_o\zeta+\varphi(w_o)-c$ for
all $\zeta$ and $\varphi$ (see \cite{Zalinescu}, theorem 2.2.6). It
follows that $J(s_o,w_o)\le c<+\infty$. Let $a$ be a small positive
number such that $ a s_o\in(-1,1)$ and $a w_o\in B_{0,1}$. Then, for
all $\delta>0$ we find the bound
\begin{equation*}
  \adjustlimits\inf_{s\in (-\delta,\delta)}\inf_{v\in B_{0,\delta}}\inf_{\gamma>0}
  \big\{\gamma J(s/\gamma,v/\gamma)\big\}\le\inf_{\gamma>0}
  \big\{\gamma J(a\delta s_o/\gamma,a\delta w_o/\gamma)\big\}\le a\delta J(s_o,w_o),
\end{equation*}
which gives $\Upsilon(0,0)\le 0$ once $\delta$ is sent to 0.

To conclude, let us prove part $(iv)$.  Fix $\beta>0$ and
$w\in\rew$. It is clear that
  \begin{equation*}
  \Upsilon(\beta,w)\le\adjustlimits\lim_{\delta\downarrow 0}\inf_{v\in B_{w,\delta}}\inf_{\gamma>0}\big\{\gamma J(\beta/\gamma,v/\gamma)\big\}.
\end{equation*}
  Let us demonstrate the opposite bound. For all $\delta\in(0,\beta)$
  and $s\in(\beta-\delta,\beta+\delta)$ we have $s>0$ and $B_{\beta
    w/s,\beta\delta/s}\subseteq
  B_{w,(\beta+\|w\|)\delta/(\beta-\delta)}$. The latter is due to the
  fact that if $s\in(\beta-\delta,\beta+\delta)$ and $v\in B_{\beta
    w/s,\beta\delta/s}$, then
  \begin{align}
    \nonumber
    \|v-w\|&\le \|v-\beta w/s\|+\|\beta w/s-w\|<\beta\delta/s+|\beta/s-1|\|w\|\\
    \nonumber
    &<\beta\delta/(\beta-\delta)+\delta\|w\|/(\beta-\delta)=(\beta+\|w\|)\delta/(\beta-\delta).
\end{align}
Thus, recalling that $J$ is non-negative, for every
$\delta\in(0,\beta)$ we can write
\begin{align}
  \nonumber
  \adjustlimits\inf_{s\in (\beta-\delta,\beta+\delta)}\inf_{v\in B_{w,\delta}}\inf_{\gamma>0}\big\{\gamma J(s/\gamma,v/\gamma)\big\}&=
  \adjustlimits\inf_{s\in (\beta-\delta,\beta+\delta)}\inf_{v\in B_{\beta w/s,\beta\delta/s}}\inf_{\gamma>0}\big\{(\gamma J(s/\gamma,sv/\gamma\beta)\big\}\\
  \nonumber
  &=\adjustlimits\inf_{s\in (\beta-\delta,\beta+\delta)}\inf_{v\in B_{\beta w/s,\beta\delta/s}}\inf_{\gamma>0}\big\{(s\gamma/\beta) J(\beta/\gamma,v/\gamma)\big\}\\
  \nonumber
  &\ge (1-\delta/\beta)\inf_{v\in B_{w,(\beta+\|w\|)\delta/(\beta-\delta)}}\inf_{\gamma>0}\big\{\gamma J(\beta/\gamma,v/\gamma)\big\}.
\end{align}
This inequality shows that
\begin{equation*}
  \Upsilon(\beta,w)\ge\adjustlimits\lim_{\delta\downarrow 0}\inf_{v\in B_{w,\delta}}\inf_{\gamma>0}\big\{\gamma J(\beta/\gamma,v/\gamma)\big\}.
\qedhere
\end{equation*}
\end{proof}

We are now in the position to prove part (a) of theorem \ref{mainth}.
\begin{proposition}
The rate functions $\Ii$ and $\Is$ are lower semicontinuous and convex.
\end{proposition}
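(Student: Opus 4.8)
The plan is to handle $\Ii$ and $\Is$ simultaneously, since both have the shape $I_\ell:=\inf_{\beta\in[0,1]}\{\Upsilon(\beta,\cdot\,)+(1-\beta)\ell\}$ when $\ell<+\infty$ and $I_\ell:=\Upsilon(1,\cdot\,)$ when $\ell=+\infty$, with $\ell=\elli$ for $\Ii$ and $\ell=\ells$ for $\Is$. So it is enough to prove that, for an arbitrary $\ell\in[0,+\infty]$, the function $I_\ell$ on $\rew$ is lower semicontinuous and convex, and I would argue separately in the two cases.

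In the case $\ell=+\infty$ the claim is immediate: $I_\ell=\Upsilon(1,\cdot\,)$ is the composition of the lower semicontinuous convex function $\Upsilon$ (Lemma \ref{lem:Upsilon_prop}$(i)$) with the continuous affine map $w\mapsto(1,w)$ from $\rew$ to $\Rl\times\rew$, hence it inherits both lower semicontinuity and convexity.

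For $\ell<+\infty$ the idea is to realize $I_\ell$ as the partial infimum of a single function on $\Rl\times\rew$. I would set $H(\beta,w):=\Upsilon(\beta,w)+(1-\beta)\ell$ for $\beta\in[0,1]$ and $H(\beta,w):=+\infty$ otherwise, so that $I_\ell(w)=\inf_{\beta\in\Rl}H(\beta,w)$ for every $w\in\rew$. First I would check that $H$ is convex: on the convex set $[0,1]\times\rew$ it is the sum of the convex function $\Upsilon$ and the affine function $(\beta,w)\mapsto(1-\beta)\ell$, and it is $+\infty$ off this set. Then I would check that $H$ is lower semicontinuous: the function $(\beta,w)\mapsto\Upsilon(\beta,w)+(1-\beta)\ell$ is lower semicontinuous on all of $\Rl\times\rew$, being the sum of a lower semicontinuous function and a continuous one with no $(+\infty)+(-\infty)$ ambiguity (recall $\Upsilon\ge 0$ by Lemma \ref{lem:Upsilon_prop}$(ii)$), and replacing it by $+\infty$ outside the closed set $[0,1]\times\rew$ preserves lower semicontinuity. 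Convexity of $I_\ell$ is then automatic, since the infimum of a convex function over one block of variables is convex in the remaining ones.

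The one step where I expect to do real work is lower semicontinuity of $I_\ell$, because partial infima of lower semicontinuous functions are generally not lower semicontinuous; the saving feature is that the infimum runs over the compact interval $[0,1]$. Given $w_n\to w$ in $\rew$, I would pass to a subsequence along which $I_\ell(w_n)$ converges to $\liminf_n I_\ell(w_n)$ (a nonnegative number since $H\ge 0$, and if it equals $+\infty$ there is nothing to prove), choose near-optimal $\beta_n\in[0,1]$ with $H(\beta_n,w_n)\le I_\ell(w_n)+1/n$, extract by compactness a further subsequence along which $\beta_n\to\beta\in[0,1]$, and then conclude from lower semicontinuity of $H$ at $(\beta,w)$ that $I_\ell(w)\le H(\beta,w)\le\liminf_n H(\beta_n,w_n)\le\liminf_n\{I_\ell(w_n)+1/n\}=\liminf_n I_\ell(w_n)$. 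Everything else is a routine use of the properties of $\Upsilon$ collected in Lemma \ref{lem:Upsilon_prop}.
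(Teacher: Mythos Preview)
Your proposal is correct and follows essentially the same approach as the paper: both rely on the convexity and lower semicontinuity of $\Upsilon$ together with the compactness of $[0,1]$, passing to convergent subsequences of near-minimizing $\beta$'s to transfer lower semicontinuity through the infimum. The only cosmetic differences are that the paper uses exact minimizers $\beta_k$ (whose existence it notes via compactness and lower semicontinuity) rather than $1/n$-near-optimal ones, and verifies convexity by hand with these minimizers instead of invoking the general infimal-projection fact you cite.
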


\begin{proof}
We address the rate function $\Ii$. The same arguments apply to $\Is$.
If $\elli=+\infty$, then $\Ii=\Upsilon(1,\cdot\,)$ and lower
semicontinuity and convexity of $\Ii$ immediately follow from part
$(i)$ of lemma \ref{lem:Upsilon_prop}. Assume that $\elli<+\infty$.
In order to demonstrate lower semicontinuity of $\Ii$, let us show
that the set $F:=\{w\in\rew:\Ii(w)\le\lambda\}$ is closed for any
given real number $\lambda$. Let $\{w_k\}_{k\ge 1}$ be a sequence in
$F$ converging to a point $w$. We claim that $w\in F$. In fact, by the
lower semicontinuity of $\Upsilon$ and the compactness of $[0,1]$,
for each $k\ge 1$ there exists $\beta_k\in[0,1]$ such that
$\Ii(w_k)=\Upsilon(\beta_k,w_k)+(1-\beta_k)\elli$. The compactness of
$[0,1]$ also entails that there exists a subsequence
$\{\beta_{k_j}\}_{j\ge 1}$ that converges to some number
$\beta_o\in[0,1]$. We have $\lambda\ge\Ii(w_{k_j})=
\Upsilon(\beta_{k_j},w_{k_j})+(1-\beta_{k_j})\elli$ for all $j\ge 1$,
which gives $\lambda\ge \Upsilon(\beta_o,w)+(1-\beta_o)\elli\ge
\Ii(w)$ once $j$ is sent to infinity. Thus, $w\in F$.

As far as convexity of $\Ii$ is concerned, given $w_1\in\rew$ and
$w_2\in\rew$, let $\beta_1\in[0,1]$ and $\beta_2\in[0,1]$ be such that
$\Ii(w_1)=\Upsilon(\beta_1,w_1)+(1-\beta_1)\elli$ and
$\Ii(w_2)=\Upsilon(\beta_2,w_2)+(1-\beta_2)\elli$. We recall that the
existence of $\beta_1$ and $\beta_2$ is guaranteed by the lower
semicontinuity of $\Upsilon$. This way, if $a_1\ge 0$ and $a_2\ge 0$
are two numbers such that $a_1+a_2=1$, then convexity of $\Upsilon$
shows that
\begin{align}
  \nonumber
  \Ii(a_1w_1+a_2w_2)&\le\Upsilon(a_1\beta_1+a_2\beta_2,a_1w_1+a_2w_2)+(1-a_1\beta_1-a_2\beta_2)\elli\\
  \nonumber
  &\le a_1\big[\Upsilon(\beta_1,w_1)+(1-\beta_1)\elli\big]+a_2\big[\Upsilon(\beta_2,w_2)+(1-\beta_2)\elli\big]\\
  \nonumber
  &= a_1\Ii(w_1)+a_2\Ii(w_2).
  \qedhere
\end{align}
\end{proof}

\subsection{The lower large deviation bound for open sets}
\label{sec:opensets}

The proof of part (b) of theorem \ref{mainth} relies on the following
lower bound: for each set $A\in\Brew$ and integers $1\le p<q$ 
\begin{align}
  \nonumber
  \prob\bigg[\frac{W_t}{t}\in A\bigg]&\ge\prob\bigg[\frac{W_t}{t}\in A, \,T_p\le t<T_q\bigg]\\
  &=\sum_{n=p}^{q-1}\prob\Bigg[\frac{1}{t}\sum_{i=1}^nX_i\in A, \,T_n\le t<T_{n+1}\Bigg].
  \label{eq:partenza}
\end{align}
This lower bound gives the forthcoming lemma, which applies for both
$\elli<+\infty$ and $\elli=+\infty$ and demonstrates part (b) of
theorem \ref{mainth} directly when $\elli=+\infty$.
\begin{lemma}
\label{lower_bound_inf}
  For every $G\subseteq\rew$ open and $w\in G$
\begin{equation*}
  \liminf_{t\uparrow+\infty}\frac{1}{t}\ln\prob\bigg[\frac{W_t}{t}\in G\bigg]\ge-\Upsilon(1,w).
\end{equation*}
\end{lemma}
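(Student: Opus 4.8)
The plan is to reduce the lemma to the claim that, for every $\gamma>0$ and every $v\in G$,
\begin{equation*}
\liminf_{t\uparrow+\infty}\frac1t\ln\prob\bigg[\frac{W_t}{t}\in G\bigg]\ge-\gamma\,J(1/\gamma,v/\gamma);
\end{equation*}
call this bound ($\star$). Granting ($\star$) and taking the supremum over $\gamma>0$ and $v\in G$, one gets $\liminf_t\frac1t\ln\prob[W_t/t\in G]\ge-\inf_{v\in G}\inf_{\gamma>0}\{\gamma J(1/\gamma,v/\gamma)\}$; since $w\in G$ and $G$ is open there is $\delta>0$ with $B_{w,\delta}\subseteq G$, and part $(iv)$ of lemma \ref{lem:Upsilon_prop}, together with the monotonicity of $\delta\mapsto\inf_{v\in B_{w,\delta}}\inf_{\gamma>0}\{\gamma J(1/\gamma,v/\gamma)\}$ under $\delta\downarrow0$, gives $\inf_{v\in G}\inf_{\gamma>0}\{\gamma J(1/\gamma,v/\gamma)\}\le\inf_{v\in B_{w,\delta}}\inf_{\gamma>0}\{\gamma J(1/\gamma,v/\gamma)\}\le\Upsilon(1,w)$, so that $\liminf_t\frac1t\ln\prob[W_t/t\in G]\ge-\Upsilon(1,w)$. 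One may assume $J(1/\gamma,v/\gamma)<+\infty$, since otherwise ($\star$) is trivial.

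To prove ($\star$) I would first fix $\rho>0$ with $B_{v,\rho}\subseteq G$ and a constant $c_o>0$ with $\prob[S_1\ge c_o]>0$, which exists because $S_1>0$ almost surely. For small parameters $\eta,\epsilon_1,\epsilon_2>0$, set $\gamma_1:=\gamma-\epsilon_1$, $\gamma_2:=\gamma+\epsilon_2$, $p:=\lfloor\gamma_1 t\rfloor$, $q:=\lceil\gamma_2 t\rceil$, and $c:=\rho/\big(4(\gamma_2-\gamma_1)\big)$, and lower-bound $\prob[W_t/t\in G]$ by $\prob[E_1\cap E_2]=\prob[E_1]\,\prob[E_2]$. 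Here $E_1:=\big\{\frac1p\sum_{i=1}^p(S_i,X_i)\in U\big\}$, with $U:=(1/\gamma-\eta,1/\gamma+\eta)\times B_{v/\gamma,\eta}$ an open neighbourhood of $(1/\gamma,v/\gamma)$ in $\Rl\times\rew$, and $E_2:=\bigcap_{i=p+1}^{q}\{S_i\ge c_o,\,\|X_i\|<c\}$; the two events are independent as they involve disjoint blocks of the i.i.d.\ sequence. The crux is that $E_1\cap E_2\subseteq\{W_t/t\in G\}$ for all $t$ large enough. Indeed, on $E_1$ one has $T_p=p\,\bar S_p<p(1/\gamma+\eta)<t$ for large $t$ whenever $\eta<(\gamma-\gamma_1)/(\gamma\gamma_1)$, and $t-T_p<t\big[(\gamma-\gamma_1)/\gamma+\gamma_1\eta\big]+o(t)$; on $E_2$ one has $\sum_{i=p+1}^q S_i\ge(q-p)c_o\ge(\gamma_2-\gamma_1)c_o\,t$, which exceeds $t-T_p$ for $t$ large once $\epsilon_2$ is chosen large enough relative to $\epsilon_1$ and $\eta$ so that $(\gamma_2-\gamma_1)c_o>(\gamma-\gamma_1)/\gamma+\gamma_1\eta$; hence $T_q>t$ and therefore $p\le N_t<q$, where $N_t:=\sum_{i\ge1}\mathds{1}_{\{T_i\le t\}}$. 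Consequently $W_t=\sum_{i=1}^pX_i+\sum_{i=p+1}^{N_t}X_i$, where on $E_2$ the second sum satisfies $\big\|\sum_{i=p+1}^{N_t}X_i\big\|\le\sum_{i=p+1}^q\|X_i\|<(q-p)c<\rho t/2$ for $t$ large, while on $E_1$ one has $\big\|\frac1t\sum_{i=1}^pX_i-v\big\|<\rho/2$ for $t$ large because $p/t\to\gamma_1$ is close to $\gamma$ and $\bar X_p$ lies within $\eta$ of $v/\gamma$; the triangle inequality then places $W_t/t$ inside $B_{v,\rho}\subseteq G$. All the smallness requirements on $\eta,\epsilon_1,\epsilon_2$ are mutually consistent and can be satisfied while letting these parameters tend to $0$.

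It then remains to take limits in the inequality $\frac1t\ln\prob[W_t/t\in G]\ge\frac pt\cdot\frac1p\ln\prob[E_1]+\frac{q-p}{t}\ln\prob[S_1\ge c_o,\,\|X_1\|<c]$. By the lower bound in Cram\'er's theorem for the i.i.d.\ pairs $(S_i,X_i)$ recalled in appendix \ref{Cramer}, applied to the open set $U$, one has $\liminf_{p\uparrow\infty}\frac1p\ln\prob[E_1]\ge-\inf_U J\ge-J(1/\gamma,v/\gamma)$ because $(1/\gamma,v/\gamma)\in U$; multiplying by $p/t\to\gamma_1$ and using $J\ge0$ from part $(ii)$ of lemma \ref{lem:Upsilon_prop}, the first term contributes at least $-\gamma_1 J(1/\gamma,v/\gamma)\ge-\gamma J(1/\gamma,v/\gamma)$ to the $\liminf$. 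The second term equals $\frac{q-p}{t}\ln\prob[S_1\ge c_o,\,\|X_1\|<c]\to(\gamma_2-\gamma_1)\ln\prob[S_1\ge c_o,\,\|X_1\|<c]$, and since $c=\rho/\big(4(\gamma_2-\gamma_1)\big)\uparrow+\infty$ as $\gamma_2-\gamma_1\downarrow0$, this probability increases to $\prob[S_1\ge c_o]>0$, so the second contribution tends to $0$ as $\epsilon_1,\epsilon_2\downarrow0$. Letting $\eta,\epsilon_1,\epsilon_2\downarrow0$ yields ($\star$).

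The hard part is to keep the whole argument free of any moment hypothesis on $S_1$, uniformly in whether the tail of $S_1$ is heavy or light — in particular in the case $\elli=+\infty$, which the lemma must also cover. The naive device for making the $(p{+}1)$-th renewal overshoot $t$, namely forcing a single waiting time $S_{p+1}$ of order $t$, is useless here because $\prob[S_1>\epsilon t]$ may decay super-exponentially; the overshoot is instead produced by the joint effect of the $\Theta(t)$ waiting times $S_{p+1},\dots,S_q$, each merely bounded below by the fixed constant $c_o$, at the cost $(\gamma_2-\gamma_1)\ln\prob[S_1\ge c_o]$ which vanishes as $\gamma_2\downarrow\gamma$. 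A second, minor subtlety is that the Cram\'er estimate is needed at the two-sided neighbourhood $U$ of the target point $(1/\gamma,v/\gamma)$, rather than at a neighbourhood pushed strictly below $1/\gamma$; this is harmless exactly because $p\approx\gamma_1 t$ is taken with $\gamma_1<\gamma$, which leaves room for $T_p\le t$ even when $\bar S_p$ slightly exceeds $1/\gamma$.
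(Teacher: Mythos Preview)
Your proof is correct and follows essentially the same approach as the paper: reduce the lemma to the pointwise bound $(\star)$ via part $(iv)$ of lemma \ref{lem:Upsilon_prop}, then realize the event $\{W_t/t\in G\}$ by combining a Cram\'er event on the first $p\approx\gamma_1 t$ pairs with a ``padding'' block of $\Theta(t)$ waiting times bounded below by a fixed constant (and rewards bounded above) to force $T_p\le t<T_q$ without any tail assumption on $S_1$. The only cosmetic differences are that the paper fixes both bounds $m,M$ at the outset (with $\prob[S_1\ge m,\|X_1\|\le M]\ge 1/2$) and phrases the padding via a conditional probability $\mathbb{M}_t$, whereas you fix $c_o$ and let $c=\rho/(4(\gamma_2-\gamma_1))$ grow; both give a penalty that vanishes on the exponential scale as the padding width shrinks.
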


\begin{proof}
  Pick an open set $G$ in $\rew$. We shall prove that for each point
  $w\in G$ and real number $\gamma>0$
\begin{equation}
  \label{lower_bound_inf_1}
  \liminf_{t\uparrow+\infty}\frac{1}{t}\ln\prob\bigg[\frac{W_t}{t}\in G\bigg]\ge-\gamma J(1/\gamma,w/\gamma).
\end{equation}
This bound yields the lemma by optimizing over $\gamma$ and by
invoking part $(iv)$ of lemma \ref{lem:Upsilon_prop}.

Fix an arbitrary point $w\in G$ and an arbitrary real number
$\gamma>0$.  As $G$ is open, there exists $\delta>0$ such that
$B_{w,2\delta}\subseteq G$.  Since $\prob[S_1>0]=1$ there exist a
small number $m>0$ and a large number $M>0$ with the property
$\prob[S_1\ge m,\,\|X_1\|\le M]\ge 1/2$.  Let $\epsilon\in(0,1)$ be
such that $(1+2M/m+\|w\|)\epsilon<\delta$ and let $t_o>0$ be such that
$\gamma(1-\epsilon)t_o\ge 1$, $m\le 2\epsilon t_o$,
$\|w\|<\delta\gamma t_o$, and $\epsilon^2t_o\ge m+1/\gamma$.  Set
$p_t:=\lfloor \gamma(1-\epsilon)t\rfloor$ and $q_t:=p_t+\lfloor
2\epsilon t/m\rfloor$. For $t>t_o$ we have $p_t\ge 1$ as
$\gamma(1-\epsilon)t>\gamma(1-\epsilon)t_o\ge 1$ and $p_t<q_t$ as
$2\epsilon t/m>2\epsilon t_o/m\ge 1$. For brevity, we denote by
$\mathbb{M}_t$ the probability measure that maps a set $A\in\Brew$ in
\begin{equation*}
\mathbb{M}_t[A]:=\prob\bigg[A\,\bigg|\min_{p_t<i\le q_t}\big\{S_i\big\}\ge m,\,\max_{p_t<i\le q_t}\big\{\|X_i\|\big\}\le M\bigg],
\end{equation*}
and we observe that
\begin{align}
  \nonumber
  \prob[A]&\ge \mathbb{M}_t[A]\cdot\prob\bigg[\min_{p_t<i\le q_t}\big\{S_i\big\}\ge m,\,\max_{p_t<i\le q_t}\big\{\|X_i\|\big\}\le M\bigg]\\
  \nonumber
  &=\mathbb{M}_t[A]\cdot\prob\bigg[S_1>m,\,\|X_1\|\le M\bigg]^{2\epsilon t/m}\ge \frac{1}{4^{\epsilon t/m}}\,\mathbb{M}_t[A].
\end{align}

Bound (\ref{eq:partenza}) gives for any $t>t_o$
\begin{align}
  \nonumber
  \prob\bigg[\frac{W_t}{t}\in G\bigg]&\ge\sum_{n=p_t}^{q_t-1}\prob\Bigg[\frac{1}{t}\sum_{i=1}^nX_i\in B_{w,2\delta}, \,T_n\le t<T_{n+1}\Bigg]\\
  \nonumber
  &\ge\frac{1}{4^{\epsilon t/m}}\sum_{n=p_t}^{q_t-1}\mathbb{M}_t\Bigg[\frac{1}{t}\sum_{i=1}^nX_i\in B_{w,2\delta}, \,T_n\le t<T_{n+1}\Bigg].
\end{align}
We now notice that the condition $(1/p_t)\sum_{i=1}^{p_t}X_i\in
B_{w/\gamma,\epsilon/\gamma}$ implies $(1/t)\sum_{i=1}^nX_i\in
B_{w,2\delta}$ for each $n\in[p_t,q_t)$ when $t>t_o$ and
  $\max_{p_t<i\le q_t}\{\|X_i\|\}\le M$. In fact, recalling that
  $(1+2M/m+\|w\|)\epsilon<\delta$ and that $\|w\|<\delta\gamma t_o$,
  for $t>t_o$ we find
\begin{align}
  \nonumber
  \Bigg\|\frac{1}{t}\sum_{i=1}^nX_i-w\Bigg\|&=\Bigg\|\frac{p_t}{t}\bigg(\frac{1}{p_t}\sum_{i=1}^{p_t}X_i-\frac{w}{\gamma}\bigg)+
  \frac{1}{t}\sum_{i=p_t+1}^nX_i+\bigg(\frac{p_t}{\gamma t}-1\bigg)w\Bigg\|\\
\nonumber
&<\frac{p_t\epsilon}{\gamma t}+\frac{M(n-p_t)}{t}+\bigg|\frac{p_t}{\gamma t}-1\bigg|\|w\|
\le(1+2M/m+\|w\|)\epsilon+\frac{\|w\|}{\gamma t_o}<2\delta.
\end{align}
This argument yields for $t>t_o$
\begin{align}
  \nonumber
  \prob\bigg[\frac{W_t}{t}\in G\bigg]&\ge\frac{1}{4^{\epsilon t/m}}
  \sum_{n=p_t}^{q_t-1}\mathbb{M}_t\Bigg[\frac{1}{p_t}\sum_{i=1}^{p_t}X_i\in B_{w/\gamma,\epsilon/\gamma}, \,T_n\le t<T_{n+1}\Bigg]\\
  \nonumber
  &=\frac{1}{4^{\epsilon t/m}}\,\mathbb{M}_t\Bigg[\frac{1}{p_t}\sum_{i=1}^{p_t}X_i\in B_{w/\gamma,\epsilon/\gamma},\,T_{p_t}\le t<T_{q_t}\Bigg].
\end{align}

The condition $T_{p_t}<(1/\gamma+\epsilon/\gamma)p_t$ implies
$T_{p_t}\le t$. Moreover, under the constraints $t>t_o$ and
$\min_{p_t<i\le q_t}\{S_i\}\ge m$, the condition
$T_{p_t}>(1/\gamma-\epsilon/\gamma)p_t$ entails $t<T_{q_t}$. Indeed,
since $\epsilon^2t_o\ge m+1/\gamma$ by construction we have
\begin{align}
\nonumber
T_{q_t}&=T_{p_t}+\sum_{i=p_t+1}^{q_t}S_i>(1/\gamma-\epsilon/\gamma)p_t+(q_t-p_t)m\\
\nonumber
&>(1/\gamma-\epsilon/\gamma)[\gamma(1-\epsilon)t-1]+(2\epsilon t/m-1)m\\
\nonumber
&=t+\epsilon^2t-m-(1-\epsilon)/\gamma>t+\epsilon^2t_o-m-1/\gamma\ge t.
\end{align}
It follows that for $t>t_o$
\begin{align}
  \nonumber
  \prob\bigg[\frac{W_t}{t}\in G\bigg]&\ge
  \frac{1}{4^{\epsilon t/m}}
  \,\mathbb{M}_t\Bigg[\frac{1}{p_t}\sum_{i=1}^{p_t}X_i\in B_{w/\gamma,\epsilon/\gamma},\,(1/\gamma-\epsilon/\gamma)p_t<T_{p_t}<(1/\gamma+\epsilon/\gamma)p_t\Bigg]\\
\nonumber
  &=\frac{1}{4^{\epsilon t/m}}
  \,\prob\Bigg[\frac{1}{p_t}\sum_{i=1}^{p_t}(S_i,X_i)\in (1/\gamma-\epsilon/\gamma,1/\gamma+\epsilon/\gamma)\times B_{w/\gamma,\epsilon/\gamma}\Bigg],
\end{align}
where the last equality is due to the fact that
$\sum_{i=1}^{p_t}(S_i,X_i)$ is independent of $\min_{p_t<i\le
  q_t}\{S_i\}$ and $\max_{p_t<i\le q_t}\{\|X_i\|\}$. At this point,
part $(ii)$ of proposition \ref{WLDP} allows us to conclude that
\begin{equation*}
  \liminf_{t\uparrow+\infty}\frac{1}{t}\ln\prob\bigg[\frac{W_t}{t}\in G\bigg]
  \ge-\gamma(1-\epsilon)J(1/\gamma,w/\gamma)-\frac{\epsilon}{m}\ln 4.
\end{equation*}
We get (\ref{lower_bound_inf_1}) from here since $\epsilon\in(0,1)$ is
any number smaller than $\delta/(1+2M/m+\|w\|)$.
\end{proof}

The next lemma proves part (b) of theorem \ref{mainth} when
$\elli<+\infty$.

\begin{lemma}
\label{lower_bound_fin}
Assume that $\elli<+\infty$.  For each $G\subseteq\rew$ open, $w\in G$, and $\beta\in[0,1]$
\begin{equation*}
  \liminf_{t\uparrow+\infty}\frac{1}{t}\ln\prob\bigg[\frac{W_t}{t}\in G\bigg]\ge-\Upsilon(\beta,w)-(1-\beta)\elli.
\end{equation*}
\end{lemma}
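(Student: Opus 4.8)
Since $G$ is open and $w\in G$, fix $\delta>0$ with $B_{w,2\delta}\subseteq G$. The plan is to reduce the lemma to the core estimate
\begin{equation*}
  \liminf_{t\uparrow+\infty}\frac{1}{t}\ln\prob\bigg[\frac{W_t}{t}\in G\bigg]\ge -\gamma J(\beta/\gamma,v/\gamma)-(1-\beta)\elli,
\end{equation*}
claimed for every $v\in B_{w,\delta}$, every $\gamma>0$ and every $\beta\in(0,1)$. Granting it, the lemma for $\beta\in(0,1)$ is immediate: the left-hand side does not depend on $v$ or $\gamma$, so it dominates $-\inf_{v\in B_{w,\delta}}\inf_{\gamma>0}\{\gamma J(\beta/\gamma,v/\gamma)\}-(1-\beta)\elli$, and this infimum is at most $\Upsilon(\beta,w)$ because, by part $(iv)$ of Lemma \ref{lem:Upsilon_prop}, $\Upsilon(\beta,w)$ equals the limit of the same infimum as the radius of the ball shrinks, a limit which is the supremum over all radii.

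Only the endpoints remain. For $\beta=1$ the core estimate reads $\liminf\ge -\Upsilon(1,w)$ after the above optimisation (there is no waiting time to pay for), which is exactly Lemma \ref{lower_bound_inf}. For $\beta=0$, apply the core estimate with $\beta$ replaced by an arbitrary $s\in(0,1)$: this gives $\liminf\ge -\gamma J(s/\gamma,v/\gamma)-(1-s)\elli\ge -\gamma J(s/\gamma,v/\gamma)-\elli$, and maximising over $s\in(0,1)$, $v\in B_{w,\delta}$ and $\gamma>0$ yields $-\elli$ minus the corresponding infimum of $\gamma J(s/\gamma,v/\gamma)$; that infimum is at most $\Upsilon(0,w)$, since $J(s,\cdot\,)\equiv+\infty$ for $s\le 0$ (because $S_1>0$ almost surely), so that the infimum defining $\Upsilon(0,w)$ effectively runs over positive first arguments.

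To establish the core estimate, fix $v$, $\gamma$, $\beta$, set $n_t:=\lfloor\gamma t\rfloor$ (so $n_t\to+\infty$ and $n_t/t\to\gamma$), and pick $\rho\in\big(0,\min\{\delta/(2\gamma),(1-\beta)/\gamma,\beta/\gamma\}\big)$. The idea is that the first $n_t$ renewals should occupy time $\approx\beta t$ while carrying reward $\approx vt$, after which the single waiting time $S_{n_t+1}$ should overshoot the residual time $t-T_{n_t}\approx(1-\beta)t$. Introduce
\begin{equation*}
  E_t:=\bigg\{\frac{1}{n_t}\sum_{i=1}^{n_t}(S_i,X_i)\in(\beta/\gamma-\rho,\beta/\gamma+\rho)\times B_{v/\gamma,\rho}\bigg\},\qquad L_t:=\big\{S_{n_t+1}>(1-\beta+2\gamma\rho)t\big\}.
\end{equation*}
For all large $t$, on $E_t$ one has $T_{n_t}<(\beta+\gamma\rho)t\le t$, and $t-T_{n_t}<(1-\beta+\gamma\rho)t+\beta/\gamma<(1-\beta+2\gamma\rho)t$, and $\|t^{-1}\sum_{i=1}^{n_t}X_i-v\|<\gamma\rho+\delta/2<\delta$; hence on $E_t\cap L_t$ one gets $T_{n_t}\le t<T_{n_t+1}$, so $N_t=n_t$ and $W_t/t=t^{-1}\sum_{i=1}^{n_t}X_i\in B_{v,\delta}\subseteq B_{w,2\delta}\subseteq G$. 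Since $n_t$ is deterministic, $E_t$ and $L_t$ are independent (the former involves only $(S_1,X_1),\dots,(S_{n_t},X_{n_t})$, the latter only $S_{n_t+1}$), so
\begin{equation*}
  \prob\bigg[\frac{W_t}{t}\in G\bigg]\ge\prob[E_t]\,\prob[L_t]=\prob\bigg[\frac{1}{n_t}\sum_{i=1}^{n_t}(S_i,X_i)\in(\beta/\gamma-\rho,\beta/\gamma+\rho)\times B_{v/\gamma,\rho}\bigg]\,\prob\big[S_1>(1-\beta+2\gamma\rho)t\big].
\end{equation*}

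Now take $t^{-1}\ln$ and $\liminf_{t\uparrow+\infty}$. The first factor is handled by part $(ii)$ of proposition \ref{WLDP} along $n_t\to+\infty$ (the point $(\beta/\gamma,v/\gamma)$ lies in the relevant open set) and contributes at least $-\gamma J(\beta/\gamma,v/\gamma)$, the prefactor $n_t/t\to\gamma>0$ being harmless and the bound vacuous when $J(\beta/\gamma,v/\gamma)=+\infty$; the second factor contributes at least $-(1-\beta+2\gamma\rho)\elli$ by the definition of $\elli$ and the hypothesis $\elli<+\infty$. Sending $\rho\downarrow 0$ gives the core estimate. The only delicate point in the whole argument is the bookkeeping that forces $E_t\cap L_t\subseteq\{W_t/t\in G\}$ for all large $t$: the radius $\rho$ and the slack $2\gamma\rho t$ in $L_t$ must be tuned against $\delta$, $\beta$, $\gamma$ so that $T_{n_t}\le t$ and $t-T_{n_t}<S_{n_t+1}$ hold together with $t^{-1}\sum_{i=1}^{n_t}X_i\in G$, which is precisely why one restricts to $0<\beta<1$, keeping $\min\{(1-\beta)/\gamma,\beta/\gamma\}>0$. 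Unlike in Lemma \ref{lower_bound_inf}, no truncation of the intermediate waiting times and rewards is required, since a single deterministic index $n_t$ is used.
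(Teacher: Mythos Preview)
Your proof is correct and follows the same approach as the paper: set $n_t=\lfloor\gamma t\rfloor$, force the first $n_t$ renewals into a Cram\'er-type event while letting $S_{n_t+1}$ overshoot the residual time, then combine part (ii) of Proposition \ref{WLDP} with the definition of $\elli$ via independence. The only cosmetic difference is that the paper proves the core estimate for every $s\in[0,1)$ and optimises over $s$ near $\beta$ using the original definition of $\Upsilon$, whereas you work at fixed $\beta\in(0,1)$ and invoke part (iv) of Lemma \ref{lem:Upsilon_prop}, treating $\beta=0$ by approximation (your claim $J(0,\cdot\,)\equiv+\infty$ is correct---it follows from $S_1>0$ a.s.\ by sending $\zeta\to-\infty$ with $\varphi=0$---though the paper only states it explicitly for $s<0$).
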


\begin{proof}
 The instance $\beta=1$ is solved by lemma \ref{lower_bound_inf}, so
 that we must tackle the case $\beta<1$. Given an open set $G$ in
 $\rew$, we prove that for each $w\in G$ and real numbers $\gamma>0$
 and $s<1$
\begin{equation}
  \label{lower_bound_fin_1}
  \liminf_{t\uparrow\infty}\frac{1}{t}\ln\prob\bigg[\frac{W_t}{t}\in G\bigg]\ge-\gamma J(s/\gamma,w/\gamma)-(1-s)\elli.
\end{equation}
The lemma follows from here by recalling the definition of $\Upsilon$
and by optimizing over $\gamma$, $s$, and $w$.

Fix $w\in G$, $\gamma>0$, and $s<1$. If $s<0$, then there is nothing
to prove because $J(s/\gamma,w/\gamma)=+\infty$ as we have seen in the
proof of part $(iii)$ of lemma \ref{lem:Upsilon_prop}. Assume that
$s\in[0,1)$ and pick a small number $\epsilon>0$ such that
  $B_{w,\epsilon}\subseteq G$ and $s+\gamma\epsilon\le 1$. Let
  $\delta>0$ and $t_o>0$ be two real numbers satisfying
  $\gamma\delta+\|w\|/(\gamma t_o)\le\epsilon$ and $\lfloor \gamma
  t_o\rfloor\ge 1$.  Set $p_t:=\lfloor \gamma t\rfloor$. For $t>t_o$
  we have $1\le p_t\le\gamma t$ and (\ref{eq:partenza}) gives
\begin{align}
  \nonumber
  \prob\bigg[\frac{W_t}{t}\in G\bigg]&\ge\prob\Bigg[\frac{1}{t}\sum_{i=1}^{p_t}X_i\in B_{w,\epsilon},\,T_{p_t}\le t<T_{p_t+1}\Bigg]\\
\nonumber
  &\ge\prob\Bigg[\frac{1}{t}\sum_{i=1}^{p_t}X_i\in B_{w,\epsilon}, \,\gamma \frac{T_{p_t}}{p_t}\le 1,\,S_{p_t+1}>t-T_{p_t}\Bigg].
\end{align}
Since $s+\gamma\epsilon\le 1$, we can write down the bound
\begin{align}
  \nonumber
  \prob\bigg[\frac{W_t}{t}\in G\bigg]
  &\ge\prob\Bigg[\frac{1}{t}\sum_{i=1}^{p_t}X_i\in B_{w,\epsilon},\, \gamma\frac{T_{p_t}}{p_t}\in(s-\gamma\epsilon,s+\gamma\epsilon),
    \,S_{p_t+1}>t-T_{p_t}\Bigg]\\
  \nonumber
  &\ge\prob\Bigg[\frac{1}{t}\sum_{i=1}^{p_t}X_i\in B_{w,\epsilon},\,\frac{T_{p_t}}{p_t}\in(s/\gamma-\epsilon,s/\gamma+\epsilon),
    \,S_{p_t+1}>t-(s/\gamma-\epsilon)p_t\Bigg]\\
  \nonumber
  &=\prob\Bigg[\frac{1}{t}\sum_{i=1}^{p_t}X_i\in B_{w,\epsilon},\,\frac{T_{p_t}}{p_t}\in(s/\gamma-\epsilon,s/\gamma+\epsilon)\Bigg]
  \cdot\prob\big[S_1>t-(s/\gamma-\epsilon)p_t\big].
\end{align}
We now observe that the condition $(1/p_t)\sum_{i=1}^{p_t}X_i\in
B_{w/\gamma,\delta}$ implies $(1/t)\sum_{i=1}^{p_t}X_i\in
B_{w,\epsilon}$ for $t>t_o$. Indeed, recalling that
$\gamma\delta+\|w\|/(\gamma t_o)\le\epsilon$, for $t>t_o$ we find
\begin{equation*}
  \Bigg\|\frac{1}{t}\sum_{i=1}^{p_t}X_i-w\Bigg\|=\frac{p_t}{t}\Bigg\|\frac{1}{p_t}\sum_{i=1}^{p_t}X_i-w/\gamma+w/\gamma-\frac{t}{p_t}w\Bigg\|
  <\gamma\delta+\|w\|/(\gamma t)\le\epsilon.
\end{equation*}
Then, for $t>t_o$ we have
\begin{align}
  \nonumber
  \prob\bigg[\frac{W_t}{t}\in G\bigg]
 &\ge\prob\Bigg[\frac{1}{p_t}\sum_{i=1}^{p_t}X_i\in B_{w/\gamma,\delta},\,\frac{T_{p_t}}{p_t}\in(s/\gamma-\epsilon,s/\gamma+\epsilon)\Bigg]
  \cdot\prob\big[S_1>t-(s/\gamma-\epsilon)p_t\big]\\
  \nonumber
  &=\prob\Bigg[\frac{1}{p_t}\sum_{i=1}^{p_t}(S_i,X_i)\in (s/\gamma-\epsilon,s/\gamma+\epsilon)\times B_{w/\gamma,\delta}\Bigg]
  \cdot\prob\big[S_1>t-(s/\gamma-\epsilon)p_t\big],
\end{align}
so that part $(ii)$ of proposition \ref{WLDP} and
$-\liminf_{\sigma\uparrow+\infty}(1/\sigma)\prob[S_1>\sigma]=:\elli$ yield
\begin{align}
  \nonumber
  \liminf_{t\uparrow+\infty}\frac{1}{t}\ln\prob\bigg[\frac{W_t}{t}\in G\bigg]&\ge
  -\gamma J(s/\gamma,w/\gamma)-(1-s)\elli-\gamma\epsilon\elli.
\end{align}
Since $\epsilon$ is any positive number small enough, this bound
demonstrates (\ref{lower_bound_fin_1}).
\end{proof}

\subsection{The upper large deviation bound for convex sets}
\label{sec:convexsets}

The starting point to prove some upper large deviation bounds is the
following inequality, which holds for every set $A\in\Brew$ and
integer $q>1$:
\begin{align}
  \nonumber
  \prob\bigg[\frac{W_t}{t}\in A\bigg]&\le\prob\bigg[\frac{W_t}{t}\in A,\,T_q>t\bigg]+\prob\big[T_q\le t\big]\\
&\le\mathds{1}_{\{0\in A\}}\prob[S_1>t]
  +\sum_{n=1}^{q-1}\prob\Bigg[\frac{1}{t}\sum_{i=1}^nX_i\in A,\,T_n\le t<T_{n+1}\Bigg]+\prob\big[T_q\le t\big].
  \label{eq:starting_upper}
\end{align}
We complement this inequality with two lemmas, the first of which
controls the small values of the waiting times.
\begin{lemma}
  \label{lem:aux1}
  There exists a real number $\kappa>0$ such that for all sufficiently
  large numbers $\gamma$ and $t$
  \begin{equation*}
  \prob\big[T_{\lfloor \gamma t\rfloor}\le t\big]\le e^{-\kappa\gamma t}.
\end{equation*}
\end{lemma}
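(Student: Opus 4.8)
The plan is to control $\prob[T_{\lfloor\gamma t\rfloor}\le t]$ by a Chernoff bound on the \emph{left} tail of the sum $T_n=S_1+\cdots+S_n$, using only the fact that the waiting times are strictly positive. For any $\theta>0$ and any integer $n\ge 1$, Markov's inequality applied to the non-negative random variable $e^{-\theta T_n}$ gives
\begin{equation*}
\prob\big[T_n\le t\big]=\prob\big[e^{-\theta T_n}\ge e^{-\theta t}\big]\le e^{\theta t}\,\Ex\big[e^{-\theta T_n}\big]=e^{\theta t}\,\Ex\big[e^{-\theta S_1}\big]^n,
\end{equation*}
the last identity following from the independence of $S_1,\dots,S_n$. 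Since $\prob[S_1>0]=1$, for every $\theta>0$ the variable $e^{-\theta S_1}$ is strictly smaller than $1$ with full probability, so $\Ex[e^{-\theta S_1}]<1$; the key point for this paper is that only these negative exponential moments are invoked, and they exist with no hypothesis on the law of $S_1$.

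Next I would fix one such $\theta>0$, set $c:=-\ln\Ex[e^{-\theta S_1}]>0$, and specialize to $n=\lfloor\gamma t\rfloor$. Using $\lfloor\gamma t\rfloor\ge\gamma t-1$, one gets for all $\gamma,t$ with $\gamma t\ge 1$
\begin{equation*}
\prob\big[T_{\lfloor\gamma t\rfloor}\le t\big]\le e^{\theta t}\,e^{-c(\gamma t-1)}=e^{\,c+\theta t-c\gamma t}.
\end{equation*}
The exponent equals $-c\gamma t+\theta t+c$, which is at most $-\tfrac{c}{2}\gamma t$ precisely when $\tfrac{c}{2}\gamma t\ge\theta t+c$, i.e. when $\gamma\ge 2\theta/c+2/t$. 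Hence, choosing for instance $t\ge 1$ and $\gamma\ge 2\theta/c+2$, the assertion of the lemma holds with $\kappa:=c/2$.

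I do not expect a genuine obstacle here: the argument is a direct exponential Chebyshev estimate, and the only things to verify are the elementary facts that $\Ex[e^{-\theta S_1}]<1$ for $\theta>0$ (a consequence of $\prob[S_1>0]=1$) and that the prefactors $e^{\theta t}$ and $e^{c}$, coming respectively from the Chernoff bound and from rounding $\gamma t$ down, are absorbed into $e^{-c\gamma t}$ once $\gamma$ and $t$ are large. (Conceptually, the lemma is a crude uniform large deviation upper bound for the counting renewal process, since $\{T_{\lfloor\gamma t\rfloor}\le t\}=\{N_t\ge\lfloor\gamma t\rfloor\}$, but the explicit computation above is the shortest route.)
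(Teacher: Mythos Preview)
Your proof is correct and uses the same core idea as the paper: an exponential Markov (Chernoff) bound on the left tail of $T_n=S_1+\cdots+S_n$, exploiting only that $S_1>0$ almost surely. The difference is in the execution. The paper first truncates the waiting times at a level $\eta$ with $\prob[S_1\ge\eta]>0$, applies the second-order Taylor bound $e^{-z}\le 1-z+z^2/2$ to $e^{-\lambda(S_1\wedge\eta)}$, and then optimizes the resulting quadratic in $\lambda$ to obtain an explicit constant $\kappa=\xi^2/18$. Your route is shorter: you fix one $\theta>0$, use directly that $\Ex[e^{-\theta S_1}]<1$ whenever $S_1>0$ a.s., set $c:=-\ln\Ex[e^{-\theta S_1}]$, and absorb the prefactors $e^{\theta t}$ and $e^c$ by taking $\gamma$ and $t$ large. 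This avoids the truncation and the Taylor expansion entirely; the paper's extra work buys only a slightly more explicit (though still not sharp) constant in terms of $\eta$ and $\xi$, which is never used downstream.
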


\begin{proof}
Since $\prob[S_1>0]=1$, there exists a number $\eta>0$ such that
$\xi:=\prob[S_1\ge\eta]>0$. Pick three real numbers $\gamma\ge
3/\eta\xi$, $t\ge\eta\xi$, and $\lambda\ge 0$ and set $i:=\lfloor
\gamma t\rfloor$ for brevity. Chernoff bound and the equality
$e^{-z}\le 1-z+z^2/2$ valid for all $z\ge 0$ allows us to write down
the bound
\begin{align}
  \nonumber
  \prob\big[T_i\le t\big]&\le e^{\lambda t}\,\Ex[e^{-\lambda T_i}]\le
  e^{\lambda t}\,\Ex[e^{-\lambda S_1\wedge\eta}]^i\\
  \nonumber
  &\le e^{\lambda t}\Big\{1-\lambda\Ex\big[S_1\wedge\eta\big]+\lambda^2\Ex\big[(S_1\wedge\eta)^2\big]/2\Big\}^i
\le e^{\lambda t}\Big\{1-\lambda\eta\xi+\lambda^2\eta^2/2\Big\}^i.
\end{align}
At this point, the inequality $1+z\le e^z$ valid for all $z\in\Rl$
gives
\begin{equation}
  \prob\big[T_i\le t\big]\le e^{\lambda t-\lambda\eta\xi i+\lambda^2\eta^2i/2}.
\label{aux11}
\end{equation}
Since $\gamma\ge 3/\eta\xi$ and $t\ge\eta\xi$ we have
\begin{align}
  \nonumber
\eta\xi i-t\ge \eta\xi(\gamma t-1)-t=\gamma t[\eta\xi-1/\gamma(1+\eta\xi/t)]\ge\gamma t[\eta\xi-2/\gamma]\ge \gamma t\eta\xi/3>0.
\end{align}
This way, we can set $\lambda:=(\eta\xi i-t)/(\eta^2i)$ in
(\ref{aux11}) to get
\begin{equation*}
  \prob\big[T_i\le t\big]\le e^{-\frac{(\eta\xi i-t)^2}{2\eta^2 i}}\le e^{-\frac{\gamma^2 t^2\eta^2\xi^2}{18\eta^2\gamma t}}=e^{-\frac{\gamma\xi^2}{18}t}.
\qedhere
\end{equation*}
\end{proof}

The second lemma is more technical and is needed to estimate
probabilities involving convex sets.
\begin{lemma}
\label{lem:aux2}
Let $\alpha<\beta$ be two real numbers and let $C\subseteq\rew$ be
open convex, closed convex, or any convex set in $\Brew$ when $\rew$
is finite-dimensional.  Then, for all $n\ge 1$ and $t>0$
\begin{equation*}
  \ln \prob\Bigg[\frac{1}{t}\sum_{i=1}^n(S_i,X_i)\in[\alpha,\beta]\times C\Bigg]
  \le-t\inf_{(s,w)\in[\alpha,\beta]\times C}\big\{\Upsilon(s,w)\big\}.
\end{equation*}
\end{lemma}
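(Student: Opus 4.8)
The plan is to reduce the estimate to the non-asymptotic Cram\'er upper bound for the empirical means of the i.i.d.\ pairs $(S_i,X_i)$ gathered in appendix~\ref{Cramer}, and then to trade the Cram\'er rate function $J$ for $\Upsilon$ by means of the scaling $\gamma=n/t$ together with the fact that $\Upsilon$ lies below $\inf_{\gamma>0}\{\gamma J(\cdot/\gamma,\cdot/\gamma)\}$. Throughout $C$ may be assumed non-empty, the degenerate case being trivial ($\prob=0$ and the infimum $+\infty$).

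First I would rescale. Fix $n\ge 1$ and $t>0$. Multiplication by the positive constant $t/n$ is a homeomorphism of $\Rl\times\rew$ preserving convexity, openness, closedness and Borel measurability, so
\[
  \Gamma:=\tfrac{t}{n}\bigl([\alpha,\beta]\times C\bigr)=\Bigl[\tfrac{t\alpha}{n},\tfrac{t\beta}{n}\Bigr]\times\tfrac{t}{n}C
\]
is again the product of a compact interval with a convex set of the same type as $C$. Since
\[
  \Bigl\{\tfrac1t\textstyle\sum_{i=1}^n(S_i,X_i)\in[\alpha,\beta]\times C\Bigr\}=\Bigl\{\tfrac1n\textstyle\sum_{i=1}^n(S_i,X_i)\in\Gamma\Bigr\},
\]
the Cram\'er upper bound of appendix~\ref{Cramer}, applied to the convex set $\Gamma$ for this fixed $n$, yields
\[
  \ln\prob\Bigl[\tfrac1t\textstyle\sum_{i=1}^n(S_i,X_i)\in[\alpha,\beta]\times C\Bigr]\le-n\inf_{(s',w')\in\Gamma}\bigl\{J(s',w')\bigr\}=-\inf_{(s,w)\in[\alpha,\beta]\times C}\Bigl\{n\,J\bigl(\tfrac{t}{n}s,\tfrac{t}{n}w\bigr)\Bigr\},
\]
the last equality coming from the substitution $(s',w')=(t/n)(s,w)$.

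It then remains to replace $J$ by $\Upsilon$ inside the infimum. Setting $\gamma:=n/t>0$, so that $t/n=1/\gamma$ and $n=\gamma t$, one has for every $(s,w)\in\Rl\times\rew$
\[
  n\,J\bigl(\tfrac{t}{n}s,\tfrac{t}{n}w\bigr)=\gamma t\,J(s/\gamma,w/\gamma)=t\bigl\{\gamma J(s/\gamma,w/\gamma)\bigr\}\ge t\,\Upsilon(s,w),
\]
the inequality holding because $\Upsilon$, being the lower-semicontinuous regularization of $\inf_{\gamma'>0}\{\gamma'J(\cdot/\gamma',\cdot/\gamma')\}$, lies below that function and hence below $\gamma J(\cdot/\gamma,\cdot/\gamma)$. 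Inserting this into the previous display and taking the infimum over $(s,w)\in[\alpha,\beta]\times C$ gives exactly the asserted bound.

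The one step calling for care is whether the Cram\'er bound of appendix~\ref{Cramer} applies to $\Gamma$. This is immediate when $C$ is closed convex, since $\Gamma$ is then closed convex, and when $\rew$ is finite-dimensional, since $\Gamma$ is then a convex Borel set; in the remaining case of an open convex $C$ in an infinite-dimensional $\rew$, where $\Gamma$ is neither open nor closed, one invokes the product formulation in which appendix~\ref{Cramer} records the estimate, the waiting-time coordinate being one-dimensional, or else approximates $\Gamma$ from outside by the open convex sets $\bigl(\tfrac{t\alpha}{n}-\varepsilon,\tfrac{t\beta}{n}+\varepsilon\bigr)\times\tfrac{t}{n}C$ and lets $\varepsilon\downarrow 0$, exploiting that $s\mapsto\inf_{w\in(t/n)C}\Upsilon(s,w)$ is convex and that the interval factor is compact to pass the infimum to $\Gamma$. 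I expect this technicality to be the only real obstacle; the rest is the elementary scaling bookkeeping displayed above.
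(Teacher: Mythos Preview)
Your proof is correct and follows the same route as the paper: rescale by $t/n$, apply the non-asymptotic Cram\'er upper bound of lemma~\ref{lemma_conv} in appendix~\ref{Cramer} to the product set $[\alpha_o,\beta_o]\times C_o$, then bound $(n/t)J(ts/n,tw/n)\ge\Upsilon(s,w)$ with $\gamma=n/t$. The ``product formulation'' you invoke to resolve your one concern is precisely lemma~\ref{lemma_conv}, which is stated directly for sets of the form $[\alpha,\beta]\times C$ with $C$ open convex, closed convex, or convex Borel in finite dimension, so the approximation alternative you sketch is unnecessary (and its justification would need more care than you give it, since appendix~\ref{Cramer} does not record a non-asymptotic upper bound for general open convex sets).
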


\begin{proof}
Fix an integer $n\ge 1$ and a real number $t>0$. Set
$\alpha_o:=t\alpha/n$ and $\beta_o:=t\beta/n$ and denote by $C_o$ the
convex set $\{ tw/n:w\in C\}\in\Brew$. Then, $C_o$ is open or closed
if $C$ is open or closed and lemma \ref{lemma_conv} shows that
\begin{align}
  \nonumber
  \ln \prob\Bigg[\frac{1}{t}\sum_{i=1}^n(S_i,X_i)\in[\alpha,\beta]\times C\Bigg]&
  =\ln \prob\Bigg[\frac{1}{n}\sum_{i=1}^n(S_i,X_i)\in[\alpha_o,\beta_o]\times C_o\Bigg]\\
  \nonumber
  &\le-n\inf_{(s,w)\in[\alpha_o,\beta_o]\times C_o}\big\{J(s,w)\big\}\\
  \nonumber
  &=-t\inf_{(s,w)\in[\alpha,\beta]\times C}\big\{(n/t)J(ts/n,tw/n)\big\}.
\end{align}
On the other hand, we have $(n/t)J(ts/n,tw/n)\ge \Upsilon(s,w)$ for
all $s\in\Rl$ and $w\in\rew$ by definition.
\end{proof}

We are now in the position to demonstrate part (d) of theorem
\ref{mainth}. An upper large deviation bound for convex sets comes
from the following lemma.
\begin{lemma}
\label{lem:aux3}
Let $\ell\le\ells$ be a real number and let $C\subseteq\rew$ be
open convex, closed convex, or any convex set in $\Brew$ when $\rew$
is finite-dimensional.  Then
\begin{equation*}
  \limsup_{t\uparrow+\infty}\frac{1}{t}\ln\prob\bigg[\frac{W_t}{t}\in C\bigg]\le -\inf_{w\in C}\inf_{\beta\in[0,1]}\big\{\Upsilon(\beta,w)+(1-\beta)\ell\big\}.
\end{equation*}
\end{lemma}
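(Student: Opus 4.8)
The strategy is to bound $\prob[W_t/t\in C]$ from above using inequality (\ref{eq:starting_upper}) with an appropriately chosen $q=q_t$ growing linearly in $t$, then estimate each of the three terms on the right-hand side. The term $\prob[T_{q_t}\le t]$ will be handled by Lemma \ref{lem:aux1}, which forces a lower-order exponential decay once $q_t=\lfloor\gamma t\rfloor$ for $\gamma$ large; sending $\gamma\uparrow+\infty$ at the end will kill this contribution. The boundary term $\mathds{1}_{\{0\in C\}}\prob[S_1>t]$ decays like $e^{-t\ells}$ up to subexponential corrections by the definition of $\ells$, and since $\ell\le\ells$ and $\Upsilon(1,0)\ge 0$ (so $\inf_{\beta}\{\Upsilon(\beta,0)+(1-\beta)\ell\}\le\ell\le\ells$), this term is dominated by the bound we are aiming for when $0\in C$. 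The heart of the matter is the middle sum $\sum_{n=1}^{q_t-1}\prob[\frac1t\sum_{i=1}^n X_i\in C,\;T_n\le t<T_{n+1}]$.

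**Handling the main sum.** For each $n$ in the range, I would split according to whether $n$ is ``small'' or ``large'' relative to $t$: writing $T_n\le t<T_{n+1}$ as $T_n\le t$ and $S_{n+1}>t-T_n$, and using independence of $S_{n+1}$ from $(S_1,X_1),\dots,(S_n,X_n)$, each summand factors as $\prob[\frac1t\sum_{i=1}^n(S_i,X_i)\in[0,1]\times C,\ \text{extra}]\cdot(\text{tail of }S_1)$-type quantities. More precisely, I would bound $\prob[\frac1t\sum_{i=1}^n X_i\in C,\ T_n\le t<T_{n+1}]$ by partitioning the possible values of $T_n/t$ into small intervals $[\alpha_j,\alpha_{j+1}]$ of width $\eta$ inside $[0,1]$; on each piece, $\prob[\frac1t\sum_{i=1}^n(S_i,X_i)\in[\alpha_j,\alpha_{j+1}]\times C]\cdot\prob[S_1>(1-\alpha_{j+1})t]$ bounds the contribution. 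Lemma \ref{lem:aux2} controls the first factor by $e^{-t\inf_{(s,w)\in[\alpha_j,\alpha_{j+1}]\times C}\Upsilon(s,w)}$, and the definition of $\ells$ (hence $\ell\le\ells$) controls the second by roughly $e^{-(1-\alpha_{j+1})t\,\ell}$ up to $e^{o(t)}$. Combining, each piece is at most $e^{-t[\inf_{w\in C}\Upsilon(\alpha_j,w)+(1-\alpha_j)\ell]+o(t)}$ after letting $\eta\downarrow 0$ and using lower semicontinuity of $\Upsilon$; taking the infimum over $j$ and then over the $O(t)$ many pieces and values of $n$ (which costs only a polynomial, hence subexponential, factor) yields $e^{-t\inf_{w\in C}\inf_{\beta\in[0,1]}\{\Upsilon(\beta,w)+(1-\beta)\ell\}+o(t)}$.

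**Assembling and the obstacle.** Putting the three estimates together, taking $\frac1t\ln(\cdot)$, and letting $t\uparrow+\infty$ gives
\begin{equation*}
\limsup_{t\uparrow+\infty}\frac1t\ln\prob\bigg[\frac{W_t}{t}\in C\bigg]\le\max\Big\{-\kappa\gamma,\ -\inf_{w\in C}\inf_{\beta\in[0,1]}\{\Upsilon(\beta,w)+(1-\beta)\ell\}\Big\};
\end{equation*}
letting $\gamma\uparrow+\infty$ removes the first term and delivers the claim. The main obstacle is the bookkeeping in the middle sum: one must control uniformly in $n$ (for $1\le n<\gamma t$) and in the partition index $j$ the interplay between the ``cost'' $\Upsilon(\alpha_j,w)$ of having the empirical mean of the pairs land in $[\alpha_j,\alpha_{j+1}]\times C$ and the ``cost'' $(1-\alpha_{j+1})\ell$ of the final waiting time being long enough, and to check that the discretization error vanishes as $\eta\downarrow 0$ by exploiting lower semicontinuity and the scaling/convexity properties of $\Upsilon$ from Lemma \ref{lem:Upsilon_prop}. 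A subtlety is that $\ell$ may be $+\infty$ (if $\ells=+\infty$), in which case only $\alpha$ near $1$ contributes and the bound reduces to $\Upsilon(1,\cdot)$ on $C$; this case should be treated by noting that $(1-\beta)\ell=+\infty$ for $\beta<1$ forces $\beta=1$ in the infimum, consistent with taking the $S_1$-tail term to decay faster than any exponential.
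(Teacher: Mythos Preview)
Your approach is essentially identical to the paper's: both use (\ref{eq:starting_upper}) with $q_t=\lfloor\gamma t\rfloor$, handle the tail term via Lemma~\ref{lem:aux1}, discretize $[0,1]$ into subintervals of small width, factor out $\prob[S_1>(1-\alpha_{j+1})t]$ by independence, and apply Lemma~\ref{lem:aux2} to the remaining factor. Two small slips worth correcting: first, by hypothesis $\ell$ is a \emph{real} number, so the case $\ell=+\infty$ never arises in this lemma (the paper treats $\ells=+\infty$ separately, outside Lemma~\ref{lem:aux3}); second, the boundary term is controlled because $\Upsilon(0,0)=0$ (Lemma~\ref{lem:Upsilon_prop}(iii)), which at $\beta=0$ gives $\inf_\beta\{\Upsilon(\beta,0)+(1-\beta)\ell\}\le\ell$, not because $\Upsilon(1,0)\ge 0$. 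The paper sidesteps the order-of-limits issue you mention by fixing $\lambda$ strictly below the target and a mesh $\epsilon$ in advance, obtaining $\limsup_t\frac1t\ln\prob[\cdot]\le-\lambda+\epsilon(1+|\ell|)$, and only then sending $\epsilon\downarrow 0$ and $\lambda$ upward; this is cleaner than letting $\eta\downarrow 0$ inside the sum over pieces.
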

If $\ells<+\infty$, then part (d) of theorem \ref{mainth} follows from
this lemma by making the choice $\ell=\ells$. If $\ells=+\infty$ and
$\Is(0)<+\infty$, then part (d) of theorem \ref{mainth} is obtained by
taking $\ell=\Is(0)=\Upsilon(1,0)$. While the former is manifest, the
latter is due to parts $(i)$ and $(ii)$ of lemma
\ref{lem:Upsilon_prop}, which show that for all $w\in\rew$ and
$\beta\in[0,1]$
\begin{align}
  \nonumber
  \Is(w):=\Upsilon(1,w)=\Upsilon\bigg(\frac{2\beta+2-2\beta}{2},\frac{2w+0}{2}\bigg)&\le\frac{1}{2}\Upsilon(2\beta,2w)+\frac{1}{2}\Upsilon(2-2\beta,0)\\
  \nonumber
  &=\Upsilon(\beta,w)+(1-\beta)\Upsilon(1,0).
\end{align}

\begin{proof}
  Pick a real number $\ell\le\ells$ and notice that $\inf_{w\in
    C}\inf_{\beta\in[0,1]}\{\Upsilon(\beta,w)+(1-\beta)\ell\}>-\infty$
  as $\Upsilon$ is non-negative by lemma \ref{lem:Upsilon_prop}. Fix
  real numbers $\lambda<\inf_{w\in
    C}\inf_{\beta\in[0,1]}\{\Upsilon(\beta,w)+(1-\beta)\ell\}$ and
  $\epsilon>0$. By lemma \ref{lem:aux1} there exists a large number
  $\gamma$ such that $\prob\big[T_{\lfloor\gamma t\rfloor}\le
    t\big]\le e^{-\lambda t}$ for all sufficiently large $t$. Set
  $q_t:=\lfloor\gamma t\rfloor$. Then, (\ref{eq:starting_upper}) gives
  for all sufficiently large $t$
\begin{equation*}
  \prob\bigg[\frac{W_t}{t}\in C\bigg]\le\mathds{1}_{\{0\in C\}}\prob[S_1>t]
  +\sum_{n=1}^{q_t-1}\prob\Bigg[\frac{1}{t}\sum_{i=1}^nX_i\in C,\,T_n\le t<T_{n+1}\Bigg]+e^{-\lambda t}.
\end{equation*}
Since $-\limsup_{t\uparrow+\infty}(1/t)\ln\prob[S_1>t]=:\ells\ge\ell$,
there exists a constant $M>0$ such that $\prob[S_1>t]\le
Me^{(\epsilon-\ell)t}$ for all $t>0$. Moreover, by definition we have
$\lambda<\Upsilon(\beta,w)+(1-\beta)\ell$ for all $\beta\in[0,1]$ and
$w\in C$. Recall that $\Upsilon(0,0)=0$ by lemma
\ref{lem:Upsilon_prop}.  Thus, if $0\in C$, then
$\lambda<\Upsilon(0,0)+\ell=\ell$, which shows that
$\mathds{1}_{\{0\in C\}}\le e^{(\ell-\lambda)t}$ for any $t>0$. It
follows that for all sufficiently large $t$
\begin{equation*}
  \prob\bigg[\frac{W_t}{t}\in C\bigg]\le e^{-\lambda t}+M e^{(\epsilon-\lambda)t}
  +\sum_{n=1}^{q_t-1}\prob\Bigg[\frac{1}{t}\sum_{i=1}^nX_i\in C,\,T_n\le t<T_{n+1}\Bigg].
\end{equation*}
To address the third term in the r.h.s., pick real numbers
$0:=\beta_0<\beta_1<\cdots<\beta_K:=1$ such that
$\beta_k-\beta_{k-1}\le\epsilon$ for each $k$. We have
\begin{align}
  \nonumber
  \sum_{n=1}^{q_t-1}\prob\Bigg[\frac{1}{t}\sum_{i=1}^nX_i\in C,\,T_n\le t<T_{n+1}\Bigg]
&\le\sum_{n=1}^{q_t-1}\sum_{k=1}^K\prob\Bigg[\frac{1}{t}\sum_{i=1}^nX_i\in C,\,\frac{T_n}{t}\in[\beta_{k-1},\beta_k],\,S_{n+1}>t-T_n\Bigg]\\
  \nonumber
  &\le\sum_{n=1}^{q_t-1}\sum_{k=1}^K\prob\Bigg[\frac{1}{t}\sum_{i=1}^nX_i\in C,\,\frac{T_n}{t}\in[\beta_{k-1},\beta_k]\Bigg]\cdot\prob\big[S_1>(1-\beta_k)t\big]\\
\nonumber
&\le M\sum_{n=1}^{q_t-1}\sum_{k=1}^K\prob\Bigg[\frac{1}{t}\sum_{i=1}^n(S_i,X_i)\in[\beta_{k-1},\beta_k]\times C\Bigg]e^{(\epsilon-\ell)(1-\beta_k)t}.
\end{align}
On the other hand, lemma \ref{lem:aux2}, together with the fact that
$\lambda<\Upsilon(s,w)+(1-s)\ell$ for all $s\in[0,1]$ and $w\in C$,
show that for every $t>0$, $n\ge 1$, and $k\le K$
\begin{align}
  \nonumber
  \ln \prob\Bigg[\frac{1}{t}\sum_{i=1}^n(S_i,X_i)\in[\beta_{k-1},\beta_k]\times C\Bigg]
  &\le-t\inf_{(s,w)\in[\beta_{k-1},\beta_k]\times C}\big\{\Upsilon(s,w)\big\}\\
  \nonumber
  &\le-t\inf_{s\in[\beta_{k-1},\beta_k]}\big\{\lambda-(1-s)\ell\big\}\\
  \nonumber
  &\le t\big[(1-\beta_k)\ell+\epsilon|\ell|-\lambda\big].
\end{align}
It follows that for each $t>0$
\begin{equation*}
\sum_{n=1}^{q_t-1}\prob\Bigg[\frac{1}{t}\sum_{i=1}^nX_i\in C,\,T_n\le t<T_{n+1}\Bigg]
\le MKq_t\, e^{(\epsilon+\epsilon|\ell|-\lambda)t}\le \gamma MK t\,e^{(\epsilon+\epsilon|\ell|-\lambda)t}.
\end{equation*}
In conclusion, for all sufficiently large $t$ we get
\begin{equation*}
  \prob\bigg[\frac{W_t}{t}\in C\bigg]\le e^{-\lambda t}+M e^{(\epsilon-\lambda)t}+\gamma MK t\, e^{(\epsilon+\epsilon|\ell|-\lambda)t}, 
\end{equation*}
which shows that 
\begin{equation*}
  \limsup_{t\uparrow+\infty}\frac{1}{t}\ln\prob\bigg[\frac{W_t}{t}\in C\bigg]\le \epsilon+\epsilon|\ell|-\lambda. 
\end{equation*}
The lemma is proved by sending $\lambda$ to $\inf_{w\in
  C}\inf_{\beta\in[0,1]}\{\Upsilon(\beta,w)+(1-\beta)\ell\}$ and
$\epsilon$ to zero.
\end{proof}

We conclude the section by showing that the hypotheses $\ells<+\infty$
and $\Is(0)<+\infty$ in part (d) of theorem \ref{mainth} can not be
relaxed at the same time. Assume that the waiting times satisfy
$\prob[S_1>s]=e^{-s^2}$ for all positive $s$, so that $\ells=+\infty$.
Set $\rew:=\Rl^2$ and for each $i$ consider the reward
$X_i:=(S_i,Z_i)$ with $Z_i$ independent of $S_i$ and distributed
according to the standard Cauchy law: $\prob[Z_i\le
  z]=1/2+(1/\pi)\arctan(z)$ for all $z\in\Rl$. It is a simple exercise
of calculus to show that $\Is(w)=0$ if $w_1=1$ and $\Is(w)=+\infty$ if
$w_1\ne 1$ for all $w:=(w_1,w_2)\in\rew$, so that in particular we
have $\Is(0)=+\infty$. The upper large deviation bound fails for the
open convex set $C:=\{w\in\rew:w_1<1\}$. In fact, $\inf_{w\in
  C}\{\Is(w)\}=+\infty$ and for all $t>0$
\begin{align}
\nonumber
\prob\bigg[\frac{W_t}{t}\in C\bigg]&=\prob\big[S_1>t\big]+\sum_{n\ge 1}\prob\big[T_n<t<T_{n+1}\big]\\
\nonumber
&=\prob\big[T_1>t\big]+\sum_{n\ge 1}\prob\big[T_n\le t<T_{n+1}\big]=1.
\end{align}
It also fails for the closed convex set $C:=\{w\in\rew:w_1<1 \mbox{
  and } (1-w_1)w_2\ge 1\}$. Indeed, $\inf_{w\in C}\{\Is(w)\}=+\infty$
and
\begin{equation}
\limsup_{t\uparrow\infty}\frac{1}{t}\ln \prob\bigg[\frac{W_t}{t}\in C\bigg]=0
\label{ex:closed_convex_1}
\end{equation}
as we now demonstrate. Pick a real number $\epsilon\in(0,1)$ and an
integer $N\ge 1$. Let $\mu$ and $\sigma^2$ be the mean and the
variance of $S_1$, respectively.  Since $(1/N)\sum_{i=1}^NZ_i$ is
distributed as $Z_1$ by the stability property of the Cauchy law, for
every $t>0$ we have
\begin{align}
\nonumber
\prob\bigg[\frac{W_t}{t}\in C\bigg]&\ge\prob\Bigg[T_N<t<T_{N+1},\,\frac{1}{t}\sum_{i=1}^NZ_i\ge \frac{1}{1-T_N/t}\Bigg]\\
\nonumber
&\ge\prob\Bigg[t-\sqrt{\epsilon\sigma^2N}<T_N\le t-\sqrt{\epsilon^2\sigma^2N},\,S_{N+1}>t-T_N,\,\sum_{i=1}^NZ_i\ge \frac{t^2}{t-T_N}\Bigg]\\
\nonumber
&\ge\prob\Big[-\sqrt{\epsilon \sigma^2N}<T_N-t\le -\sqrt{\epsilon^2\sigma^2N}\Big]\cdot\prob\Big[S_1>\sqrt{\epsilon \sigma^2N}\Big]\cdot
\prob\Bigg[\sum_{i=1}^NZ_i\ge \frac{t^2}{\sqrt{\epsilon^2\sigma^2N}}\Bigg]\\
\nonumber
&=\prob\Big[-\sqrt{\epsilon \sigma^2 N}<T_N-t\le -\sqrt{\epsilon^2\sigma^2N}\Big]\cdot e^{-\epsilon \sigma^2N}\cdot
\prob\bigg[\epsilon\sigma Z_1\ge\frac{t^2}{\sqrt{N^3}}\bigg].
\end{align}
At this point, by taking $t=\mu N$ and by sending $N$ to infinity we
obtain
\begin{equation}
\limsup_{t\uparrow\infty}\frac{1}{t}\ln \prob\bigg[\frac{W_t}{t}\in C\bigg]\ge-\frac{\epsilon \sigma^2}{\mu}
\label{ex:closed_convex_2}
\end{equation}
because $\lim_{N\uparrow\infty}\prob[-\sqrt{\epsilon
    \sigma^2N}<T_N-\mu N\le
  -\sqrt{\epsilon^2\sigma^2N}]=(1/\sqrt{2\pi})\int_{-\sqrt{\epsilon}}^{-\epsilon}
e^{-\frac{1}{2}\zeta^2}d\zeta>0$ by the central limit theorem and
$\lim_{N\uparrow\infty}\mu^2\sqrt{N}\,\prob[\epsilon\sigma
  Z_1\ge\mu^2\sqrt{N}]=\epsilon\sigma/\pi$. The arbitrariness of
$\epsilon$ in (\ref{ex:closed_convex_2}) gives (\ref{ex:closed_convex_1}).

\subsection{The upper large deviation bound for compact sets}
\label{sec:compactsets}

The upper large deviation bound for compact sets is basically due to
the following lemma.
\begin{lemma}
\label{lem:compact_aux}
For each $w\in\rew$ and real number $\lambda<\Is(w)$ there exists
$\delta>0$ such that
\begin{equation*}
  \limsup_{t\uparrow+\infty}\frac{1}{t}\ln\prob\bigg[\frac{W_t}{t}\in B_{w,\delta}\bigg]\le -\lambda.
\end{equation*}
\end{lemma}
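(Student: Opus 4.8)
The plan is to derive this local upper bound from the convex-set upper bound of lemma~\ref{lem:aux3}, applied to the open ball $C=B_{w,\delta}$: the point is that a small ball around $w$ is open convex (hence a Borel set, and covered by lemma~\ref{lem:aux3} irrespective of whether $\rew$ is finite- or infinite-dimensional), and that lower semicontinuity lets us pass from the infimum over such a ball to a value as close to $\Is(w)$ as we please. For $\ell\in\Rl$ I write $g_\ell(v):=\inf_{\beta\in[0,1]}\{\Upsilon(\beta,v)+(1-\beta)\ell\}$ for $v\in\rew$, so that by the very definition of $\Is$ one has $\Is=g_{\ells}$ when $\ells<+\infty$ and $\Is=\Upsilon(1,\cdot\,)$ when $\ells=+\infty$.

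First I would record two soft facts about the family $\{g_\ell\}$. (i) Each $g_\ell$ is lower semicontinuous on $\rew$: it is the infimum over the compact set $[0,1]$ of the jointly lower semicontinuous, bounded-below function $(\beta,v)\mapsto\Upsilon(\beta,v)+(1-\beta)\ell$ (lower semicontinuity of $\Upsilon$ is lemma~\ref{lem:Upsilon_prop}$(i)$ and $\Upsilon\ge0$ is lemma~\ref{lem:Upsilon_prop}$(ii)$), and an infimum of this kind is lower semicontinuous, as one checks by extracting from a sequence of near-minimizers a convergent subsequence $\beta_n\to\beta_\ast$ in $[0,1]$ and invoking joint lower semicontinuity at $(\beta_\ast,\cdot\,)$. (ii) The map $\ell\mapsto g_\ell$ is non-decreasing (since $1-\beta\ge0$ on $[0,1]$) and $\Is=\sup\{g_\ell:\ell\in\Rl,\ \ell\le\ells\}$, the supremum running over all real $\ell$ when $\ells=+\infty$. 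This is immediate from $\Is=g_{\ells}$ and monotonicity when $\ells<+\infty$, while when $\ells=+\infty$ it amounts to $\lim_{\ell\uparrow+\infty}g_\ell=\Upsilon(1,\cdot\,)$: the inequality $g_\ell(v)\le\Upsilon(1,v)$ is clear (take $\beta=1$), and for the reverse one notes that, for near-minimizers $\beta_\ell$, non-negativity of $\Upsilon$ forces $(1-\beta_\ell)\ell$ to stay bounded, hence $\beta_\ell\to1$, hence $\Upsilon(1,v)\le\liminf_\ell\Upsilon(\beta_\ell,v)\le\lim_\ell g_\ell(v)$ by lower semicontinuity of $\Upsilon$.

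Granting these, the lemma follows quickly. Given $w$ and $\lambda<\Is(w)$, fact $(ii)$ yields a real $\ell\le\ells$ with $g_\ell(w)>\lambda$; fact $(i)$ then makes $\{v\in\rew:g_\ell(v)>\lambda\}$ open, so it contains some ball $B_{w,\delta}$, i.e. $\inf_{v\in B_{w,\delta}}g_\ell(v)\ge\lambda$. Since $B_{w,\delta}$ is open convex and $\ell\le\ells$, lemma~\ref{lem:aux3} (with its set $C$ taken to be $B_{w,\delta}$) gives
\begin{equation*}
\limsup_{t\uparrow+\infty}\frac{1}{t}\ln\prob\bigg[\frac{W_t}{t}\in B_{w,\delta}\bigg]\le-\inf_{v\in B_{w,\delta}}\inf_{\beta\in[0,1]}\big\{\Upsilon(\beta,v)+(1-\beta)\ell\big\}=-\inf_{v\in B_{w,\delta}}g_\ell(v)\le-\lambda,
\end{equation*}
which is the claim.

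All the probabilistic content sits in lemma~\ref{lem:aux3}, which is already proved, so no further large deviation estimate is needed here; the remaining steps are purely topological. I expect the only point requiring a little care to be fact $(ii)$ in the regime $\ells=+\infty$, where one must verify that the increasing limit of the $g_\ell$ genuinely coincides with $\Upsilon(1,\cdot\,)$ — this rests only on non-negativity and lower semicontinuity of $\Upsilon$ and the compactness of $[0,1]$, and is essentially the same mechanism used in the discussion following lemma~\ref{lem:aux3} to treat the case $\Is(0)<+\infty$. (When $\ells<+\infty$ one can alternatively skip fact $(i)$ altogether and just invoke lower semicontinuity of $\Is=g_{\ells}$ from part~(a) of theorem~\ref{mainth}.)
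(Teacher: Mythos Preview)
Your proof is correct. For $\ells<+\infty$ it coincides with the paper's: both invoke the convex-set upper bound (lemma~\ref{lem:aux3}, equivalently part~(d) of theorem~\ref{mainth}) on the ball $B_{w,\delta}$ after using lower semicontinuity of $\Is$.

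For $\ells=+\infty$ your route genuinely differs from the paper's. The paper does \emph{not} apply lemma~\ref{lem:aux3} in this regime; instead it goes back to the building blocks, using lower semicontinuity of $\Upsilon$ in the joint variable to secure $\Upsilon(s,v)\ge\lambda$ on $[1-\delta,1]\times B_{w,\delta}$, and then re-running the slicing estimate (inequality~(\ref{eq:starting_upper}) and lemma~\ref{lem:aux2}) together with $\prob[S_1>\delta t]$ decaying super-exponentially. You instead observe that lemma~\ref{lem:aux3} still applies for any real $\ell$ (since $\ell\le\ells=+\infty$ is automatic), and then show $\sup_{\ell\in\Rl}g_\ell=\Upsilon(1,\cdot\,)$ via a compactness/lower-semicontinuity argument on $[0,1]$. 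This unifies the two cases and avoids repeating the probabilistic estimate, at the modest cost of the convergence argument $g_\ell\uparrow\Upsilon(1,\cdot\,)$; the paper's direct approach dispenses with that convergence but essentially reproduces a specialized fragment of the proof of lemma~\ref{lem:aux3}. Both are short; yours is arguably more economical.
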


\begin{proof}
Pick $w\in\rew$ and a real number $\lambda<\Is(w)$. If
$\ells<+\infty$, then the lemma is an immediate application of part
(d) of theorem \ref{mainth}. In fact, by the lower semicontinuity of
$\Is$ there exists $\delta>0$ such that $\Is(v)\ge\lambda$ for all
$v\in B_{w,\delta}$, and the open ball $B_{w,\delta}$ is
convex. Assume $\ells=+\infty$. In such case we have
$\lambda<\Is(w)=\Upsilon(1,w)$ and by the lower semicontinuity of
$\Upsilon$ there exists $\delta\in(0,1)$ such that $\Upsilon(s,v)\ge\lambda$
for every $s\in[1-\delta,1]$ and $v\in B_{w,\delta}$. It follows by lemma 
\ref{lem:aux2} that for all $n\ge 1$ and $t>0$
\begin{equation*}
  \ln \prob\Bigg[\frac{1}{t}\sum_{i=1}^n(S_i,X_i)\in[1-\delta,1]\times B_{w,\delta}\Bigg]
  \le-t\inf_{(s,w)\in[1-\delta,1]\times B_{w,\delta}}\big\{\Upsilon(s,v)\big\}\le -\lambda t.
\end{equation*}
Moreover, lemma \ref{lem:aux1} gives that $\prob[T_{\lfloor\gamma
    t\rfloor}\le t]\le e^{-\lambda t}$ for all sufficiently large $t$
and some real number $\gamma>0$. Setting $q_t:=\lfloor\gamma
t\rfloor$, bound (\ref{eq:starting_upper}) yields for all sufficiently
large $t$
\begin{align}
  \nonumber
  \prob\bigg[\frac{W_t}{t}\in B_{w,\delta}\bigg]&\le \prob[S_1>t]
  +\sum_{n=1}^{q_t-1}\prob\Bigg[\frac{1}{t}\sum_{i=1}^nX_i\in B_{w,\delta},\,T_n\le t<T_{n+1}\Bigg]+e^{-\lambda t}\\
  \nonumber
  &\le q_t\prob[S_1>\delta t]
  +\sum_{n=1}^{q_t-1}\prob\Bigg[\frac{1}{t}\sum_{i=1}^nX_i\in B_{w,\delta},\,t-S_{n+1}<T_n\le t,\,S_{n+1}\le\delta t\Bigg]+e^{-\lambda t}\\
  \nonumber
  &\le q_t\prob[S_1>\delta t]
  +\sum_{n=1}^{q_t-1}\prob\Bigg[\frac{1}{t}\sum_{i=1}^nX_i\in B_{w,\delta},\,\frac{T_n}{t}\in[1-\delta,1]\Bigg]+e^{-\lambda t}\\
  \nonumber
  &\le \gamma t\,\prob[S_1>\delta t]+\gamma t e^{-\lambda t}
\end{align}
This inequality proves the lemma since
$\limsup_{t\uparrow+\infty}(1/t)\ln\prob[S_1>\delta t]=-\infty$ as
$\ells=+\infty$.
\end{proof}

Let us verify part (c) of theorem \ref{mainth}. Pick a compact set $F$
in $\rew$ and a real number $\lambda<\inf_{w\in F}\{\Is(w)\}$. Lemma
\ref{lem:compact_aux} guarantees that for each $w\in F$ there exists
$\delta_w>0$ such that
\begin{equation*}
  \limsup_{t\uparrow+\infty}\frac{1}{t}\ln\prob\bigg[\frac{W_t}{t}\in B_{w,\delta_w}\bigg]\le -\lambda.
\end{equation*}
Since $F$ is compact, we can find a finite number of points
$w_1,\ldots,w_K$ in $F$ such that $F\subset\cup_{k=1}^K
B_{w_k,\delta_{w_k}}$. It follows that
\begin{equation*}
  \limsup_{t\uparrow+\infty}\frac{1}{t}\ln\prob\bigg[\frac{W_t}{t}\in F\bigg]\le
  \limsup_{t\uparrow+\infty}\frac{1}{t}\ln\sum_{k=1}^K\prob\bigg[\frac{W_t}{t}\in B_{w_k,\delta_{w_k}}\bigg]\le -\lambda,
\end{equation*}
which yields
\begin{equation*}
  \limsup_{t\uparrow+\infty}\frac{1}{t}\ln\prob\bigg[\frac{W_t}{t}\in F\bigg]\le-\inf_{w\in F}\big\{\Is(w)\big\}
\end{equation*}
once $\lambda$ is sent to $\inf_{w\in F}\{\Is(w)\}$.

\subsection{The upper large deviation bound for closed sets}
\label{sec:closedsets}

The upper large deviation bound can be extended from compact sets to
close sets if the probability distribution of $W_t/t$ is exponential
tight (see \cite{DemboBook}, lemma 1.2.18), namely if for each real
number $\lambda>0$ there exists a compact set $K$ in $\rew$ such that
\begin{equation*}
  \limsup_{t\uparrow+\infty}\frac{1}{t}\ln\prob\bigg[\frac{W_t}{t}\notin K\bigg]\le-\lambda.
\end{equation*}
The following lemma establishes exponential tightness of the scaled
cumulant reward when $\rew$ has finite dimension and proves part (e)
of theorem \ref{mainth}.
\begin{lemma}
\label{closed_constrained}
Assume that $\rew$ has finite dimension and that there exist numbers
$\zeta\le 0$ and $\sigma>0$ such that $\Ex[e^{\zeta
    S_1+\sigma\|X_1\|}]<+\infty$. Then, the probability distribution
of $W_t/t$ is exponential tight. Moreover, $\Ii$ and $\Is$ have
compact level sets.
\end{lemma}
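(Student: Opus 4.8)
The plan is to establish exponential tightness of $W_t/t$ first, and then deduce compactness of the level sets of $\Ii$ and $\Is$ from the large deviation bounds already proved. Exponential tightness in a finite-dimensional space reduces to a coordinate-wise estimate: it suffices to show that for each $\lambda>0$ there is a compact ball $\{w:\|w\|\le R\}$ outside of which the probability $\prob[W_t/t\notin K]$ decays like $e^{-\lambda t}$. Since $\|W_t\|\le\sum_{i=1}^{N_t}\|X_i\|$, the key is to control $\prob[\sum_{i=1}^{N_t}\|X_i\|>Rt]$, and for this I would split according to how many renewals have occurred by time $t$. Specifically, I would write, using a cutoff integer $\lfloor\gamma t\rfloor$ as in Lemma~\ref{lem:aux1},
\begin{equation*}
\prob\bigg[\frac{\|W_t\|}{t}>R\bigg]\le\prob\big[T_{\lfloor\gamma t\rfloor}\le t\big]+\prob\Bigg[\sum_{i=1}^{\lfloor\gamma t\rfloor}\|X_i\|>Rt\Bigg].
\end{equation*}
The first term is at most $e^{-\kappa\gamma t}$ by Lemma~\ref{lem:aux1} for $\gamma$ large, so it can be made $\le e^{-\lambda t}$ by choosing $\gamma$ large enough.

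For the second term I would like to apply a Chernoff/Cram\'er estimate to the i.i.d.\ nonnegative variables $\|X_i\|$, but the hypothesis only gives $\Ex[e^{\zeta S_1+\sigma\|X_1\|}]<+\infty$ with $\zeta\le 0$, not an unconditional exponential moment on $\|X_1\|$ alone. The way around this is to pair each $\|X_i\|$ with its waiting time $S_i$: on the event $T_{\lfloor\gamma t\rfloor}\le t$ we automatically have $\sum_{i=1}^{\lfloor\gamma t\rfloor}S_i\le t$, so conditioning or intersecting appropriately lets me use the joint moment generating function. Concretely, for $n=\lfloor\gamma t\rfloor$ and any $\sigma'\in(0,\sigma]$,
\begin{equation*}
\prob\Bigg[\sum_{i=1}^{n}\|X_i\|>Rt,\ \sum_{i=1}^n S_i\le t\Bigg]\le e^{-\sigma' R t}\,\Ex\big[e^{\sigma'\|X_1\|+(-\zeta/\text{?})\,S_1}\big]^{\cdots}
\end{equation*}
— more cleanly, since $\zeta\le 0$ and $\sum S_i\le t$, one has $e^{\sigma'\sum\|X_i\|}\le e^{-\zeta t}\,e^{\sigma'\sum\|X_i\|+\zeta\sum S_i}$ on that event, giving
\begin{equation*}
\prob\Bigg[\sum_{i=1}^{n}\|X_i\|>Rt,\ T_n\le t\Bigg]\le e^{-\sigma' R t}e^{-\zeta t}\,\Ex\big[e^{\sigma'\|X_1\|+\zeta S_1}\big]^{n}=e^{(-\sigma' R-\zeta+\gamma\ln\Ex[e^{\sigma'\|X_1\|+\zeta S_1}])t+o(t)}.
\end{equation*}
The complementary event $T_n>t$ is again absorbed into $\prob[T_{\lfloor\gamma t\rfloor}\le t]$'s complement only trivially, so in fact I should organize the split as: either $T_{\lfloor\gamma t\rfloor}>t$, handled by Lemma~\ref{lem:aux1} applied to its complement... actually the clean split is $\prob[\|W_t\|/t>R]\le \prob[N_t\ge \lfloor\gamma t\rfloor]+\prob[\sum_{i\le \lfloor\gamma t\rfloor}\|X_i\|>Rt]$ and on the second event I further intersect with $T_{\lfloor\gamma t\rfloor}\le t$ versus not, the "not" part being the small-waiting-times estimate. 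Choosing $\gamma$ first (large, to kill the renewal-count term), then $R$ large (so that $-\sigma'R$ dominates the fixed constant $-\zeta+\gamma\ln\Ex[e^{\sigma'\|X_1\|+\zeta S_1}]$), makes the whole bound $\le e^{-\lambda t}$. This gives exponential tightness.

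For the compact level sets: once exponential tightness holds, combining it with part~(c) (the upper bound for compact sets, already proved) via the standard argument (\cite{DemboBook}, Lemma~1.2.18) upgrades the upper bound to all closed sets. Then for the level set $\Psi_\lambda:=\{w:\Is(w)\le\lambda\}$ — which is closed by lower semicontinuity (part~(a)) — I would argue it is contained in a compact set: pick the compact $K$ from exponential tightness with parameter $\lambda+1$; if some $w\notin K$ had $\Is(w)\le\lambda$, then by lower semicontinuity a small closed ball $\bar B_{w,\delta}$ disjoint from $K$ would still satisfy $\inf_{\bar B_{w,\delta}}\Is\le\lambda$, contradicting that the closed-set upper bound forces $\limsup(1/t)\ln\prob[W_t/t\in \bar B_{w,\delta}]\le-\inf_{\bar B_{w,\delta}}\Is$ while also $\prob[W_t/t\in\bar B_{w,\delta}]\le\prob[W_t/t\notin K]\le e^{-(\lambda+1)t}$ eventually — wait, that only shows $\Is\ge\lambda+1$ on that ball, contradiction. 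Hence $\Psi_\lambda\subseteq K$, so $\Psi_\lambda$ is a closed subset of a compact set, hence compact. The same argument applies verbatim to $\Ii$, using $\Ii\le\Is$ (immediate from $\ells\le\elli$ and the defining formulas, together with part~(iii) of Lemma~\ref{lem:Upsilon_prop}) so that the level sets of $\Ii$ contain those of $\Is$ — actually one needs them bounded, which follows since $\Ii$'s level sets are closed and I can run the identical exponential-tightness contradiction directly for $\Ii$ as well.

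**Main obstacle.** The delicate point is the moment estimate: the hypothesis couples $S_1$ and $\|X_1\|$ with a \emph{non-positive} coefficient $\zeta$ on $S_1$, so one cannot bound $\Ex[e^{\sigma\|X_1\|}]$ directly. The trick of trading the constraint $T_n\le t$ for the factor $e^{-\zeta t}=e^{|\zeta|t}$ (harmless since it is absorbed by taking $R$ large) is what makes the joint exponential moment usable, and getting the order of quantifiers right — $\gamma$ chosen before $R$, and $R$ chosen to beat a $\gamma$-dependent constant — is the crux. Everything else (the reduction to balls in finite dimension, the closed-set upgrade, the level-set argument) is routine.
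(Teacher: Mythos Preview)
Your proof sketch has two genuine gaps, one in each half of the argument.

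\textbf{Exponential tightness.} After the split
\[
\prob\bigg[\frac{\|W_t\|}{t}>R\bigg]\le\prob\big[T_{\lfloor\gamma t\rfloor}\le t\big]+\prob\Bigg[\sum_{i=1}^{\lfloor\gamma t\rfloor}\|X_i\|>Rt\Bigg],
\]
you try to handle the second term by intersecting with $\{T_{\lfloor\gamma t\rfloor}\le t\}$ versus $\{T_{\lfloor\gamma t\rfloor}>t\}$. But the ``not'' part is $\{T_{\lfloor\gamma t\rfloor}>t\}$, which for large $\gamma$ has probability close to $1$; Lemma~\ref{lem:aux1} bounds its \emph{complement}, so it is of no use here. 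And on that event you have no control over $\sum_{i\le\lfloor\gamma t\rfloor}\|X_i\|$, because the hypothesis does \emph{not} imply $\Ex[e^{\sigma'\|X_1\|}]<+\infty$ for any $\sigma'>0$ (one can build examples with $\|X_1\|\approx S_1$ and $c_n$ decaying subexponentially along a subsequence). The issue is that your crude passage from $\|W_t\|\le\sum_{i\le N_t}\|X_i\|$ to $\sum_{i\le\lfloor\gamma t\rfloor}\|X_i\|$ discards the constraint $T_{N_t}\le t$, which is precisely what makes the joint moment usable. The paper avoids the cutoff altogether: by dominated convergence one may replace $\zeta$ by a more negative $\eta$ so that $\Ex[e^{\eta S_1+\sigma\|X_1\|}]\le 1/2$, and then sums directly
\[
\prob\bigg[\frac{\|W_t\|}{t}>R\bigg]\le\sum_{n\ge 1}\prob\Bigg[\sum_{i=1}^n\|X_i\|>Rt,\,T_n\le t\Bigg]\le e^{-(\sigma R+\eta)t}\sum_{n\ge 1}2^{-n},
\]
so no appeal to Lemma~\ref{lem:aux1} is needed. (Your argument can be repaired by keeping the sum over $m=N_t$ and the constraint $T_m\le t$ for each $m<\lfloor\gamma t\rfloor$, which then lets you choose $R$ to beat the accumulated factor; but as written it does not close.)

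\textbf{Compact level sets.} Your contradiction argument combines the \emph{upper} bound $\limsup(1/t)\ln\prob[W_t/t\in\bar B_{w,\delta}]\le-\inf_{\bar B_{w,\delta}}\Is$ with the exponential-tightness bound $\limsup\le-(\lambda+1)$. These are two upper bounds on the same quantity; they do not contradict each other and yield no information on $\Is$. The standard route (Dembo--Zeitouni, Lemma~1.2.18) uses the \emph{lower} bound, which is available only for $\Ii$: if $\Ii(w)\le\lambda$ and $w\notin K$, then $\liminf(1/t)\ln\prob[W_t/t\in B_{w,\delta}]\ge-\Ii(w)\ge-\lambda$, contradicting $\limsup\le-(\lambda+1)$. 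This settles $\Ii$. For $\Is$ no such lower bound is at hand; the paper instead argues directly from the variational formula: plugging $(\eta,\sigma\varphi_k)$ into the definition of $J$ gives $J(s,w)\ge s\eta+\sigma\varphi_k(w)$, whence $\Is(w)\ge\eta+\sigma\varphi_k(w)$ for each coordinate functional $\varphi_k$, which bounds the level sets explicitly. Note also that the inequality between the rate functions goes the other way: $\ells\le\elli$ gives $\Is\le\Ii$, so the level sets of $\Is$ \emph{contain} those of $\Ii$, and compactness of the latter does not transfer.
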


\begin{proof}
To begin with, let us observe that under the hypotheses of the lemma
we have $\lim_{\eta\downarrow-\infty}\Ex[e^{\eta
    S_1+\sigma\|X_1\|}]=0$ by the dominated convergence theorem, so
that there exists $\eta\le 0$ that satisfies $\Ex[e^{\eta
    S_1+\sigma\|X_1\|}]\le 1/2$.  Let $d$ be the dimension of $\rew$,
let $\{v_1,\ldots,v_d\}$ be a basis of $\rew$, and let
$\{\vartheta_1,\ldots,\vartheta_d\}\subset\rew^\star$ be the dual
basis: $\vartheta_k(v_l)$ equals 1 if $k=l$ and 0 otherwise for all
$k$ and $l$. For $k$ ranging from $1$ to $d$ set
$\varphi_k:=\vartheta_k/\|\vartheta_k\|$ and
$\varphi_{d+k}:=-\varphi_k$. We have $\Ex[e^{\eta
    S_1+\sigma\varphi_k(X_1)}]\le\Ex[e^{\eta S_1+\sigma\|X_1\|}]\le
1/2$ for every $k$. Fix a real number $\lambda>0$ and introduce the
compact set $K:=\cap_{k=1}^{2d}\{w\in\rew:\varphi_k(w)\le \rho\}$,
where we have set $\rho:=(\lambda-\eta)/\sigma>0$ for brevity. Since
$0$ does not belong to the complement $K^c$ of $K$, for all $t>0$
\begin{equation*}
  \prob\bigg[\frac{W_t}{t}\notin K\bigg]=\sum_{n\ge 1}\prob\Bigg[\frac{1}{t}\sum_{i=1}^nX_i\in K^c,\,T_n\le t<T_{n+1}\Bigg].
\end{equation*}
As $K^c=\cup_{k=1}^{2d}\{w\in\rew:\varphi_k(w)> \rho\}$, by making use
of the Chernoff bound twice and by recalling that $\Ex[e^{\eta
    S_1+\sigma\varphi_k(X_1)}]\le 1/2$ for any $k$, we obtain for
every $t$
\begin{align}
    \nonumber
  \prob\bigg[\frac{W_t}{t}\notin K\bigg]&\le\sum_{n\ge 1}\sum_{k=1}^{2d}\prob\Bigg[\sum_{i=1}^n\varphi_k(X_i)>\rho t,\,T_n\le t\Bigg]\\
  \nonumber
  &\le e^{-\eta t-\sigma\rho t}\sum_{k=1}^{2d}\sum_{n\ge 1}\Ex\Big[e^{\eta S_1+\sigma\varphi_k(X_1)}\Big]^n\le 2d\,e^{-\lambda t}.
\end{align}
This inequality proves exponential tightness of the distribution of $W_t/t$.

To conclude, let us show that the level sets of $\Ii$ and $\Is$ are
compact.  Regarding $\Ii$, compactness of level sets follows by
combining the lower large deviation bound of part (b) of theorem
\ref{mainth} and the exponential tightness (see \cite{DemboBook},
lemma 1.2.18). Let us move to $\Is$.  To begin with, we observe that
for all $s\in\Rl$, $w\in\rew$, and $k\le 2d$
\begin{equation*}
J(s,w)\ge s\eta+\sigma\varphi_k(w)-\ln\Ex\big[e^{\eta S_1+\sigma\varphi_k(X_1)}\big]\ge s\eta+\sigma\varphi_k(w)
\end{equation*}
by definition. It follow that $\Upsilon(\beta,w)\ge
\beta\eta+\sigma\varphi_k(w)$ for each $\beta\in[0,1]$, $w\in\rew$,
and $k\le 2d$, which yields $\Is(w)\ge \eta+\sigma\varphi_k(w)$ for
every $w$ and $k$.  Thus, if $w$ is such that $\Is(w)\le\lambda$ for a
given real number $\lambda\ge 0$, then $\varphi_k(w)\le
(\lambda-\eta)/\sigma$ for all $k$. This demonstrates that level set
$\{w\in\rew:\Is(w)\le\lambda\}$ is bounded. It is closed by the lower
semicontinuity of $\Is$.
\end{proof}

The case in which $\rew$ has infinite dimension is solved by the next
lemma.

\begin{lemma}
Assume that $\rew$ has infinite dimension and that $\Ex[e^{\sigma
    S_1+\sigma\|X_1\|}]<+\infty$ for all $\sigma>0$. Then, the
probability distribution of $W_t/t$ is exponential tight. Moreover,
$\ells=+\infty$ and $\Upsilon(1,\cdot\,)$ has compact level sets.
\end{lemma}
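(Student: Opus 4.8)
The plan is to settle the two easy assertions and then concentrate everything on exponential tightness. Since $\Ex[e^{\sigma S_1}]\le\Ex[e^{\sigma S_1+\sigma\|X_1\|}]<+\infty$ for every $\sigma>0$, the Chernoff bound $\prob[S_1>s]\le e^{-\sigma s}\Ex[e^{\sigma S_1}]$ gives $\limsup_{s\uparrow+\infty}(1/s)\ln\prob[S_1>s]\le-\sigma$ for all $\sigma>0$, so that $\ells=+\infty$ and a fortiori $\elli=+\infty$; the very definitions of $\Ii$ and $\Is$ then yield $\Ii=\Is=\Upsilon(1,\cdot\,)$. Once exponential tightness of the law of $W_t/t$ is established, the level sets of $\Upsilon(1,\cdot\,)=\Ii$ are compact by combining it with the lower large deviation bound of part (b) of theorem \ref{mainth}, exactly as was done for $\Ii$ in the proof of lemma \ref{closed_constrained} (see \cite{DemboBook}, lemma 1.2.18). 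So everything reduces to exponential tightness.

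First I would fix $\lambda>0$ and, via lemma \ref{lem:aux1}, a large number $\gamma$ such that $\prob\big[T_{\lfloor\gamma t\rfloor}\le t\big]\le e^{-\lambda t}$ for all large $t$ (the lemma gives a decay rate proportional to $\gamma$, so this holds once $\gamma$ is large enough). With $N_t:=\sum_{i\ge1}\mathds{1}_{\{T_i\le t\}}$ and $q_t:=\lfloor\gamma t\rfloor$ this controls the event $\{N_t\ge q_t\}=\{T_{q_t}\le t\}$, and on $\{N_t<q_t\}$ the variable $W_t=\sum_{i=1}^{N_t}X_i$ is a sum of fewer than $\gamma t$ terms. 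The point where the finite and infinite dimensional cases part ways is that a bound on $\|W_t/t\|$ only confines $W_t/t$ to a ball, which is not compact, so the $2d$-functionals argument of lemma \ref{closed_constrained} has no analogue. Instead I would use that $\Ex[e^{\sigma\|X_1\|}]\le\Ex[e^{\sigma S_1+\sigma\|X_1\|}]<+\infty$ for all $\sigma>0$: for each integer $j\ge1$ the tilted measure $\nu_j(A):=\Ex\big[e^{\theta_j\|X_1\|}\mathds{1}_{\{X_1\in A\}}\big]$, with $\theta_j:=j(\gamma\ln 2+\lambda+j)$, is a finite Borel measure on the Polish space $\rew$, hence tight; I choose a compact set $C_j$ with $\nu_j(C_j^c)\le1$ and put $K_j:=\overline{\mathrm{co}}(C_j\cup\{0\})$, the closed convex hull of $C_j\cup\{0\}$, which is compact (the closed convex hull of a compact set in a Banach space is compact), convex, contains $0$, and satisfies $\Ex\big[e^{\theta_j\|X_1\|}\mathds{1}_{\{X_1\notin K_j\}}\big]\le\nu_j(C_j^c)\le1$, whence $\Ex\big[e^{\theta_j\|X_1\|\mathds{1}_{\{X_1\notin K_j\}}}\big]\le2$. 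The compact set I would aim at is $K:=\bigcap_{j\ge1}\big(\gamma K_j+\overline{B_{0,1/j}}\big)$, with $\overline{B_{0,1/j}}$ the closed ball of radius $1/j$: each set in the intersection is closed (sum of a compact set and a closed ball), and for any $\epsilon>0$ one may take $j$ so large that $\gamma K_j+\overline{B_{0,1/j}}$, and a fortiori $K$, is covered by finitely many balls of radius $\epsilon$ (use compactness of $K_j$ and $1/j<\epsilon/2$); being closed and totally bounded in a Banach space, $K$ is compact.

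It remains to bound $\prob[W_t/t\notin K]$. On $\{N_t<q_t\}$, split $W_t=\sum_{i=1}^{N_t}X_i\mathds{1}_{\{X_i\in K_j\}}+\sum_{i=1}^{N_t}X_i\mathds{1}_{\{X_i\notin K_j\}}$; by convexity and $0\in K_j$ the first sum lies in $N_tK_j\subseteq q_tK_j$, so $W_t/t\in(q_t/t)K_j+\overline{B_{0,1/j}}\subseteq\gamma K_j+\overline{B_{0,1/j}}$ as soon as $\sum_{i=1}^{N_t}\|X_i\|\mathds{1}_{\{X_i\notin K_j\}}\le t/j$. Hence on $\{N_t<q_t\}$ the event $\{W_t/t\notin\gamma K_j+\overline{B_{0,1/j}}\}$ forces $\sum_{i=1}^{q_t}\|X_i\|\mathds{1}_{\{X_i\notin K_j\}}>t/j$, so the Chernoff bound together with the choice of $\theta_j$ gives
\begin{align*}
\prob\bigg[\frac{W_t}{t}\notin\gamma K_j+\overline{B_{0,1/j}},\,N_t<q_t\bigg]
&\le\prob\Bigg[\sum_{i=1}^{q_t}\|X_i\|\mathds{1}_{\{X_i\notin K_j\}}>\frac{t}{j}\Bigg]\\
&\le e^{-\theta_jt/j}\,\Ex\big[e^{\theta_j\|X_1\|\mathds{1}_{\{X_1\notin K_j\}}}\big]^{q_t}\le e^{-\theta_jt/j}\,2^{q_t}\le e^{-(\lambda+j)t}.
\end{align*}
Summing over $j\ge1$ and adding the bound on $\{N_t\ge q_t\}$, I obtain $\prob[W_t/t\notin K]\le e^{-\lambda t}+e^{-\lambda t}\sum_{j\ge1}e^{-jt}\le2e^{-\lambda t}$ for all large $t$; since $\lambda>0$ was arbitrary, this is exponential tightness.

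I expect the genuine obstacle to be the construction of $K$, not any single estimate. One really needs tightness of the exponentially tilted laws of $X_1$, Mazur's theorem on closed convex hulls, and the nested-intersection device to upgrade ``bounded up to a small perturbation'' into an honestly compact target set; the rest — the $\ells=+\infty$ computation, the Chernoff estimates, and the passage from exponential tightness to goodness of the rate function (see \cite{DemboBook}, lemma 1.2.18) — is routine or already available in the paper.
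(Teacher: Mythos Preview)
Your proof is correct and takes a genuinely different route from the paper's. The paper exploits the ready-made exponential tightness of the empirical means $(1/n)\sum_{i=1}^nX_i$ coming from Cram\'er's theory in separable Banach spaces (cited as an exercise in \cite{DemboBook}): it fixes one compact $K_o$ at the right level, scales it to $K$ via the closed convex hull with $\{0\}$, and then splits the renewal count into a low range $[p_t,q_t)$ (where the Cram\'er bound applies) and the tails $\{N_t<p_t\}$ and $\{N_t\ge q_t\}$, each controlled by a Chernoff estimate on $T_{p_t}$ and $T_{q_t}$. Your construction is more self-contained: you essentially reprove the Banach-space tightness step by tilting the law of $X_1$ at a sequence of scales $\theta_j$, extracting compact sets $K_j$ from Polish tightness, and intersecting $\gamma K_j+\overline{B_{0,1/j}}$ to obtain a genuinely compact $K$ via total boundedness. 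The payoff is that your splitting $W_t=\sum X_i\mathds{1}_{\{X_i\in K_j\}}+\sum X_i\mathds{1}_{\{X_i\notin K_j\}}$ together with $0\in K_j$ handles all $N_t<q_t$ uniformly, so the lower truncation $p_t$ and the extra Chernoff bound on $\prob[T_{p_t}>t]$ are not needed. The paper's approach is shorter because it delegates to a known result; yours is more explicit and arguably more elementary, at the cost of building the nested compact set by hand.
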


\begin{proof}
  Fix $\lambda>0$ and set $\epsilon:=\lambda/\ln\Ex[e^{2\lambda
      S_1}]$, which is positive since $\Ex[e^{2\lambda S_1}]<+\infty$
  by hypothesis. As $\Ex[e^{\sigma\|X_1\|}]<+\infty$ for all
  $\sigma>0$, the theory of Cram\'er in separable Banach spaces tells
  us that the distribution of $(1/n)\sum_{i=1}^nX_i$ is exponential
  tight (see \cite{DemboBook}, exercise 6.2.21). Thus, there exists a
  compact set $K_o$ in $\rew$ such that for all sufficiently large $n$
\begin{equation}
\prob\Bigg[\frac{1}{n}\sum_{i=1}^nX_i\notin K_o\Bigg]\le e^{-\lambda n/\epsilon}.
\label{Cramer_tight}
\end{equation}
Let $C$ be the closure of the convex hull of $\{0\}\cup K_o$, which is
compact (see \cite{Rudin}, theorem 3.20). Let $\gamma>\epsilon$ be a
real number such that $\prob[T_{\lfloor \gamma t\rfloor}\le t]\le
e^{-\lambda t}$ for all $t$ large enough, which exists by lemma
\ref{lem:aux1}, and set $K:=\{w\in\rew:\gamma w\in C\}$, which clearly
is a compact set. Set $p_t:=\lfloor \epsilon t\rfloor$ and
$q_t:=\lfloor\gamma t\rfloor$.  For all sufficiently large $t$ we have
$1\le p_t<q_t$ and $\prob[T_{p_t}>t]\le\Ex[e^{2\lambda
    S_1}]^{p_t}e^{-2\lambda t}\le\Ex[e^{2\lambda S_1}]^{\epsilon
  t}e^{-2\lambda t}=e^{-\lambda t}$ by the Chernoff bound. Then, for
all sufficiently large $t$ we can write
\begin{align}
  \nonumber
  \prob\bigg[\frac{W_t}{t}\notin K\bigg]&\le\prob\big[T_{p_t}>t\big]
  +\prob\bigg[\frac{W_t}{t}\notin K,\,T_{p_t}\le t<T_{q_t}\bigg]+\prob\big[T_{q_t}\le t\big]\\
\nonumber
  &\le 2e^{-\lambda t}+\sum_{n=p_t}^{q_t-1}\prob\Bigg[\frac{1}{t}\sum_{i=1}^nX_i\notin K\Bigg].
\end{align}
On the other hand, the condition $(1/n)\sum_{i=1}^nX_i\in C$ implies
$(n/\gamma t)(1/n)\sum_{i=1}^nX_i\in C$, namely
$(1/t)\sum_{i=1}^nX_i\in K$, for $n<q_t$ as $C$ is convex and contains
the origin. This shows that if $(1/t)\sum_{i=1}^nX_i\notin K$ for
$n<q_t$, then $(1/n)\sum_{i=1}^nX_i\notin C$. Thus, by recalling that
$K_0\subseteq C$ and by invoking (\ref{Cramer_tight}), for all
sufficiently large $t$ we find
\begin{align}
  \nonumber
  \prob\bigg[\frac{W_t}{t}\notin K\bigg]&\le 2e^{-\lambda t}+\sum_{n=p_t}^{q_t-1}\prob\Bigg[\frac{1}{n}\sum_{i=1}^nX_i\notin C\Bigg]\\
  \nonumber
  &\le 2e^{-\lambda t}+\sum_{n=p_t}^{q_t-1}\prob\Bigg[\frac{1}{n}\sum_{i=1}^nX_i\notin K_o\Bigg]
  \le 2e^{-\lambda t}+\gamma t\,e^{-\lambda p_t/\epsilon}.
\end{align}
This bound yields
\begin{equation*}
 \limsup_{t\uparrow+\infty}\frac{1}{t}\ln\prob\bigg[\frac{W_t}{t}\notin K\bigg]\le -\lambda.
\end{equation*}

As before, the lower large deviation bound of part (b) of theorem
\ref{mainth} and the exponential tightness imply that $\Ii$ has
compact level sets. On the other hand, since $e^{\sigma
  s}\,\prob[S_1>s]\le \Ex[e^{\sigma S_1}]<+\infty$ for all $\sigma>0$
and $s>0$, we have $\ells=\elli=+\infty$. Thus,
$\Upsilon(1,\cdot\,)=\Ii$.
\end{proof}

%\section*{Acknowledgements}

\appendix

\section{Proof of proposition \ref{prop:ratewait}}
\label{proof:ratewait}

By definition we have $\Is\le \Ii\le \Upsilon(1,\cdot\,)$. Let us show
that $\Is\ge\Upsilon(1,\cdot\,)$ under the hypotheses of the
proposition, which is nontrivial only when $\ells<+\infty$. Suppose
that $\ells<+\infty$. In this case we have $\Ex[e^{\zeta
    S_1+\varphi(X_1)}]=+\infty$ for all $\zeta>\ells$ and
$\varphi\in\rew^\star$. In fact, given $\zeta>\ells$ and
$\varphi\in\rew^\star$ one can find a real number $\epsilon>0$ such
that $\zeta-\epsilon>\ells\ge 0$ and $\|\varphi\|f(s)\le\epsilon s$
for all sufficiently large $s$.  Then, $\Ex[e^{\zeta
    S_1+\varphi(X_1)}]\ge e^{(\zeta-\epsilon)t}\,\prob[S_1>t]$ for all
sufficiently large $t$. It follows that $\Ex[e^{\zeta
    S_1+\varphi(X_1)}]=+\infty$ since $\zeta-\epsilon>\ells$. This
way, for all $\gamma>0$, $s\le 1$, and $w\in\rew$ we find
\begin{align}
\nonumber
\gamma J(s/\gamma,w/\gamma)&=\sup_{(\zeta,\varphi)\in(-\infty,\ells]\times\rew^\star}\Big\{s\zeta+\varphi(w)-\gamma\ln\Ex\big[e^{\zeta S_1+\varphi(X_1)}\big]\Big\}\\
\nonumber
&\ge\sup_{(\zeta,\varphi)\in(-\infty,\ells]\times\rew^\star}\Big\{\zeta+\varphi(w)-\gamma\ln\Ex\big[e^{\zeta S_1+\varphi(X_1)}\big]\Big\}+(s-1)\ells\\
  \nonumber
  &=\gamma J(1/\gamma,w/\gamma)+(s-1)\ells.
\end{align}
The definition of $\Upsilon$ and part $(iv)$ of lemma
\ref{lem:Upsilon_prop} then yields $\Upsilon(\beta,w)\ge
\Upsilon(1,w)+(\beta-1)\ells$ for every $\beta\in[0,1]$ and
$w\in\rew$, so that $\Is\ge \Upsilon(1,\cdot\,)$.

\section{Cram\'er's theory for waiting times and rewards}
\label{Cramer}

This appendix introduces the basics of Cram\'er's theory that are used
to prove theorem \ref{mainth}. Let the space $\Rl\times\rew$ be
endowed with the product topology and the Borel $\sigma$-field
$\BWrew$ and consider the measure $\mu_n$ over $\BWrew$ defined for
each integer $n\ge 1$ by
\begin{equation*}
\mu_n:=\prob\Bigg[\frac{1}{n}\sum_{i=1}^n (S_i,X_i)\in \cdot\,\Bigg].
\end{equation*}
Of fundamental importance is the following super-multiplicativity
property.
\begin{lemma}
  Let $C\in\BWrew$ be convex and let $m\ge 1$ and $n\ge 1$ be two
  integers. Then, $\mu_{m+n}(C)\ge\mu_m(C)\cdot\mu_n(C)$.
\end{lemma}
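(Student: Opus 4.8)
The plan is to exploit the elementary fact that an average of $m+n$ i.i.d.\ vectors is a convex combination of the average of the first $m$ and the average of the last $n$, with weights $m/(m+n)$ and $n/(m+n)$. Concretely, writing $Y_i:=(S_i,X_i)$, one has the identity
\begin{equation*}
\frac{1}{m+n}\sum_{i=1}^{m+n}Y_i=\frac{m}{m+n}\cdot\frac{1}{m}\sum_{i=1}^{m}Y_i+\frac{n}{m+n}\cdot\frac{1}{n}\sum_{i=m+1}^{m+n}Y_i
\end{equation*}
in $\Rl\times\rew$. Since $C$ is convex, whenever both partial averages $\frac1m\sum_{i=1}^m Y_i$ and $\frac1n\sum_{i=m+1}^{m+n}Y_i$ lie in $C$, so does the left-hand side. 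This gives the event inclusion
\begin{equation*}
\Bigg\{\frac1m\sum_{i=1}^{m}Y_i\in C\Bigg\}\cap\Bigg\{\frac1n\sum_{i=m+1}^{m+n}Y_i\in C\Bigg\}\subseteq\Bigg\{\frac{1}{m+n}\sum_{i=1}^{m+n}Y_i\in C\Bigg\},
\end{equation*}
all three sets being measurable because $\rew$ is separable.

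From here I would simply take probabilities and use independence: the two events on the left depend on disjoint blocks $Y_1,\dots,Y_m$ and $Y_{m+1},\dots,Y_{m+n}$ of the i.i.d.\ sequence, hence are independent, and the block $Y_{m+1},\dots,Y_{m+n}$ has the same joint law as $Y_1,\dots,Y_n$, so the second factor equals $\mu_n(C)$. Therefore
\begin{equation*}
\mu_{m+n}(C)\ge\prob\Bigg[\frac1m\sum_{i=1}^{m}Y_i\in C\Bigg]\cdot\prob\Bigg[\frac1n\sum_{i=m+1}^{m+n}Y_i\in C\Bigg]=\mu_m(C)\cdot\mu_n(C),
\end{equation*}
which is the claim.

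There is essentially no hard step here: the argument is just convexity plus independence of disjoint blocks of an i.i.d.\ sequence. The only point requiring a word of care is measurability of the averages and of the set $\{\frac1n\sum_{i=1}^n Y_i\in C\}$, which is exactly why separability of $\rew$ was assumed at the outset (so that $\rew$-valued sums are Borel measurable); with that in hand the inclusion and the factorization are routine. I do not anticipate needing any exponential moment or tail hypothesis on $(S_1,X_1)$ for this lemma.
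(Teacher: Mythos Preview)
Your argument is correct and is precisely the standard convexity-plus-independence proof; the paper itself does not spell it out but simply cites lemma 6.1.12 of Dembo--Zeitouni, whose content is exactly the computation you wrote down. There is nothing to add or correct.
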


\begin{proof}
See lemma 6.1.12 of \cite{DemboBook}.
\end{proof}

Super-multiplicativity, which becomes super-additivity once logarithms
are taken, makes it possible to describe in general terms the
exponential decay with $n$ of the measure $\mu_n$. To this purpose, we
denote by $\mathcal{L}$ the extended real function over $\BWrew$
defined by the formula
\begin{equation*}
\mathcal{L}:=\sup_{n\ge 1}\bigg\{\frac{1}{n}\ln \mu_n\bigg\}.
\end{equation*}
If $C\in\BWrew$ is convex, then $\limsup_{n\uparrow\infty}(1/n)\ln
\mu_n(C)=\mathcal{L}(C)$ due the super-additivity of $\ln
\mu_n(C)$. The following lemma improves this result when $C$ is open
as well as convex.
\begin{lemma}
  \label{start}
  Let $C\subseteq\Rl\times\rew$ be open and convex. Then,
  $\lim_{n\uparrow\infty}(1/n)\ln \mu_n(C)$ exists as an extended real
  number and is equal to $\mathcal{L}(C)$.
\end{lemma}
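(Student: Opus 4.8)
The plan is to establish the two inequalities $\limsup_{n\uparrow\infty}(1/n)\ln\mu_n(C)\le\mathcal{L}(C)$ and $\liminf_{n\uparrow\infty}(1/n)\ln\mu_n(C)\ge\mathcal{L}(C)$, whose combination gives the existence of the limit and its value. The first inequality is immediate; in fact the remark preceding the lemma already yields $\limsup_{n\uparrow\infty}(1/n)\ln\mu_n(C)=\mathcal{L}(C)$ from convexity of $C$ and super-additivity of $n\mapsto\ln\mu_n(C)$, so the whole content lies in the lower bound on the liminf, that is, in upgrading the Fekete-type limsup statement to a genuine limit. If $\mu_n(C)=0$ for every $n$ then $\mathcal{L}(C)=-\infty$ and there is nothing to prove, so I assume $\mu_{n_0}(C)>0$ for some $n_0\ge 1$.

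The crucial step, and the place where openness of $C$ is really used, is to deduce that $\mu_n(C)>0$ for all sufficiently large $n$, not merely for multiples of $n_0$. I would first extract from $\mu_{n_0}(C)>0$ and openness a ball with $B_{x,2\delta}\subseteq C$ and $p:=\mu_{n_0}(B_{x,\delta})>0$, and fix $\Lambda>0$ with $\rho:=\prob[(S_1,X_1)\in B_{0,\Lambda}]>0$, which exists because $\|(S_1,X_1)\|$ is finite almost surely. Writing a large $n$ as $n=kn_0+r$ with $0\le r<n_0$, I would decompose $(1/n)\sum_{i=1}^n(S_i,X_i)=(kn_0/n)A_k+(1/n)R_r$, where $A_k$ is the average of the $k$ independent blocks $Y_j:=(1/n_0)\sum_{i=(j-1)n_0+1}^{jn_0}(S_i,X_i)$ (each with law $\mu_{n_0}$) and $R_r$ is the sum of the $r$ leftover terms. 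On the independent events $\{Y_1,\dots,Y_k\in B_{x,\delta}\}$ and $\{(S_i,X_i)\in B_{0,\Lambda}\text{ for }kn_0<i\le n\}$, convexity of the balls, the triangle inequality, and the limits $kn_0/n=1-r/n\to 1$ and $\|R_r\|/n\le n_0\Lambda/n\to 0$ place $(1/n)\sum_{i=1}^n(S_i,X_i)$ in $B_{x,2\delta}\subseteq C$ as soon as $n$ exceeds a threshold depending only on $n_0$, $\|x\|$, $\Lambda$, $\delta$. By independence this gives $\mu_n(C)\ge p^k\rho^{n_0}>0$ for all large $n$.

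It then remains to run Fekete's argument with care. Let $N_0$ be such that $a_n:=\ln\mu_n(C)\in(-\infty,0]$ for $n\ge N_0$; the super-multiplicativity lemma gives $a_{m+n}\ge a_m+a_n$ for all $m,n\ge 1$. Fixing any $m$ with $a_m>-\infty$ and writing a large $n$ as $n=q_nm+r_n$ with $r_n\in[N_0,N_0+m)$ and $q_n\ge 0$ (possible for $n$ large, by shifting the usual quotient and remainder), super-additivity gives $a_n\ge q_na_m+a_{r_n}\ge q_na_m+c_m$ with $c_m:=\min_{N_0\le j<N_0+m}\{a_j\}>-\infty$, since only finitely many finite values of $a_{r_n}$ occur. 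Dividing by $n$ and using $q_nm/n=1-r_n/n\to 1$ and $c_m/n\to 0$ yields $\liminf_{n\uparrow\infty}(a_n/n)\ge a_m/m$; taking the supremum over all admissible $m$ gives $\liminf_{n\uparrow\infty}(a_n/n)\ge\mathcal{L}(C)$, completing the proof. I expect the positivity step to be the main obstacle: without openness $\mu_n(C)$ can vanish along an infinite set, Fekete's lemma genuinely fails for $[-\infty,+\infty)$-valued super-additive sequences, and the block-plus-filler construction exploiting that $C$ is open is exactly what rules this out.
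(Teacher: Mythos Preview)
Your argument is correct. The paper does not give its own proof here but simply cites Lemma~1.1.14 of Dembo--Zeitouni, so there is nothing to compare at the level of technique; what you have written is essentially the standard proof that the cited reference contains, namely the Fekete argument once openness is used, via a block-plus-filler construction, to force $\mu_n(C)>0$ for all large $n$.
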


\begin{proof}
See lemma 1.1.14 of \cite{DemboBook}.
\end{proof}

Lemma \ref{start} prompts one to consider the rate function $J$
that maps any $(s,w)\in\Rl\times\rew$ in the extended real number
$J(w)$ defined by
\begin{equation*}
J(s,w):=-\inf\Big\{\mathcal{L}(C): C\subseteq\Rl\times\rew \mbox{ is open convex and contains }(s,w)\Big\}.
\end{equation*}
In fact, the following weak large deviation principle is verified.
\begin{proposition}
\label{WLDP}
The following conclusions hold:
\begin{enumerate}[(i)]
\item the function $J$ is lower semicontinuous and convex;
\item $\displaystyle{\liminf_{n\uparrow\infty}\frac{1}{n}\ln\mu_n(G)\ge -\inf_{(s,w)\in G}\{J(s,w)\}}$ for each $G\subseteq\Rl\times\rew$ open;
\item $\displaystyle{\limsup_{n\uparrow\infty}\frac{1}{n}\ln\mu_n(K)\le -\inf_{(s,w)\in K}\{J(s,w)\}}$ for each $K\subseteq\Rl\times\rew$ compact.
\end{enumerate} 
\end{proposition}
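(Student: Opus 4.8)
\textbf{Proof proposal for Proposition \ref{WLDP}.}

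The plan is to recognize this as a standard ``abstract Cram\'er/weak-LDP'' package and to derive it from the two facts already recorded in this appendix: super-additivity of $n\mapsto\ln\mu_n(C)$ on convex sets, and Lemma \ref{start}, which upgrades $\limsup$ to a genuine limit equal to $\mathcal{L}(C)$ when $C$ is open and convex. These are precisely the ingredients behind Lemma 1.1.14 and Lemma 6.1.7/Theorem 6.1.3 of \cite{DemboBook}, so the proof is essentially a transcription of that abstract machinery to the measures $\mu_n$ on the separable Banach space $\Rl\times\rew$.

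First I would prove part $(ii)$. Fix an open $G$ and a point $(s,w)\in G$. By definition of $J$, for every $\eta>0$ there is an open convex set $C$ with $(s,w)\in C\subseteq\Rl\times\rew$ and $\mathcal{L}(C)\ge -J(s,w)-\eta$; shrinking, I may also assume $C\subseteq G$. Lemma \ref{start} then gives $\lim_{n}(1/n)\ln\mu_n(C)=\mathcal{L}(C)\ge -J(s,w)-\eta$, and monotonicity $\mu_n(G)\ge\mu_n(C)$ yields $\liminf_n(1/n)\ln\mu_n(G)\ge -J(s,w)-\eta$. Letting $\eta\downarrow 0$ and then taking the supremum over $(s,w)\in G$ (equivalently, the infimum of $J$ over $G$) gives the claimed bound. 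If $J\equiv+\infty$ on $G$ the statement is vacuous, and if $G=\emptyset$ both sides are $-\infty$.

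Next, part $(iii)$. Fix a compact $K$ and a real $\lambda<\inf_{(s,w)\in K}\{J(s,w)\}$. For each $(s,w)\in K$ we have $J(s,w)>\lambda$, so by the definition of $J$ every open convex neighborhood $C$ of $(s,w)$ has $\mathcal{L}(C)<-\lambda$; in particular I can pick such a $C=C_{(s,w)}$ and, by super-additivity, $\limsup_n(1/n)\ln\mu_n(C_{(s,w)})=\mathcal{L}(C_{(s,w)})<-\lambda$, hence $\mu_n(C_{(s,w)})\le e^{-\lambda n}$ for all $n$ large enough depending on $(s,w)$. Cover $K$ by finitely many such sets $C_{(s_1,w_1)},\dots,C_{(s_N,w_N)}$; then for all large $n$, $\mu_n(K)\le\sum_{j=1}^N\mu_n(C_{(s_j,w_j)})\le N e^{-\lambda n}$, so $\limsup_n(1/n)\ln\mu_n(K)\le-\lambda$. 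Letting $\lambda\uparrow\inf_K J$ finishes it (the case $K=\emptyset$ being trivial).

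Finally, part $(i)$. Lower semicontinuity is automatic: $J$ is by construction a supremum over open convex $C$ of the functions $(s,w)\mapsto -\mathcal{L}(C)\cdot\mathds{1}_C(s,w)+(+\infty)\cdot\mathds{1}_{C^c}$, i.e.\ $-J$ equals $\sup\{\mathcal{L}(C):C\ni(s,w),\ C\text{ open convex}\}$, which is the supremum of upper-semicontinuous functions (each $\mathcal{L}(C)\mathds{1}_C$ is u.s.c.\ since $C$ is open), hence $-J$ is u.s.c.\ and $J$ is l.s.c. For convexity one uses the super-multiplicativity of $\mu_n$ on convex sets: given $(s_1,w_1),(s_2,w_2)$ and $\theta\in(0,1)$ rational, say $\theta=p/(p+q)$, and any open convex $C$ containing $\theta(s_1,w_1)+(1-\theta)(s_2,w_2)$, one approximates by open convex $C_1\ni(s_1,w_1)$, $C_2\ni(s_2,w_2)$ with $p C_1+q C_2\subseteq (p+q)C$ (possible since $C$ is open), and then $\mu_{(p+q)m}(C)\ge\mu_{pm}(C_1)\mu_{qm}(C_2)$ gives $\mathcal{L}(C)\ge\theta\,\mathcal{L}(C_1)+(1-\theta)\,\mathcal{L}(C_2)$ in the limit, whence $J(\theta(s_1,w_1)+(1-\theta)(s_2,w_2))\le\theta J(s_1,w_1)+(1-\theta)J(s_2,w_2)$; l.s.c.\ extends this to all $\theta\in[0,1]$. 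The only genuinely delicate point is the interchange of limits and the geometric bookkeeping in this last convexity argument (getting $pC_1+qC_2\subseteq(p+q)C$ and handling the mismatched indices $pm$ versus $qm$), but this is exactly what Lemmas 6.1.7 and 1.1.14 of \cite{DemboBook} are designed to supply, so I would simply invoke them rather than reprove them. Thus the whole proposition reduces to citing the abstract weak-LDP theorem of \cite{DemboBook} applied to the sequence $(\mu_n)$.
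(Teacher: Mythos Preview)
Your proposal is correct and matches the paper's approach exactly: the paper's entire proof is the single line ``See lemma 6.1.7 of \cite{DemboBook}'', and you arrive at the same citation after sketching its content. One small slip worth fixing: in part~(iii) you assert that \emph{every} open convex neighborhood $C$ of $(s,w)$ satisfies $\mathcal{L}(C)<-\lambda$, whereas the definition of $J$ only guarantees that \emph{some} such $C$ does---but since you immediately proceed by picking one such $C$, the argument goes through unchanged.
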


\begin{proof}
See lemma 6.1.7 of \cite{DemboBook}.
\end{proof}

The rate function $J$ can be related to the moment generating function
of waiting time and reward pairs as follows.
\begin{proposition}
  \label{Jesplicita}
  For all $(s,w)\in\Rl\times\rew$
  \begin{equation*}
   J(s,w)=\sup_{(\zeta,\varphi)\in\Rl\times\rew^\star}\Big\{s\zeta+\varphi(w)-\ln\Ex\big[e^{\zeta S_1+\varphi(X_1)}\big]\Big\}.
  \end{equation*}
\end{proposition}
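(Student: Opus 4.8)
The plan is to prove the two inequalities $J\ge\Lambda^\star$ and $J\le\Lambda^\star$ separately, where throughout this proof I abbreviate $\Lambda(\zeta,\varphi):=\ln\Ex[e^{\zeta S_1+\varphi(X_1)}]$ and write $\Lambda^\star(s,w):=\sup_{(\zeta,\varphi)\in\Rl\times\rew^\star}\{s\zeta+\varphi(w)-\Lambda(\zeta,\varphi)\}$ for the claimed value of $J(s,w)$.

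The bound $J\ge\Lambda^\star$ is a Chernoff estimate. Fix $(s,w)\in\Rl\times\rew$, a pair $(\zeta,\varphi)\in\Rl\times\rew^\star$, and a number $\delta>0$, and consider the open convex set $C:=\{(s',w')\in\Rl\times\rew:\zeta s'+\varphi(w')>\zeta s+\varphi(w)-\delta\}$, which contains $(s,w)$. Since the pairs $(S_i,X_i)$ are i.i.d.\ and $\varphi$ is linear, $\int e^{n(\zeta s'+\varphi(w'))}\mu_n(ds'\,dw')=\Ex[e^{\zeta S_1+\varphi(X_1)}]^n=e^{n\Lambda(\zeta,\varphi)}$, so Markov's inequality yields $\mu_n(C)\le e^{-n(\zeta s+\varphi(w)-\delta-\Lambda(\zeta,\varphi))}$ for every $n\ge1$. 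Hence $\mathcal{L}(C)\le\Lambda(\zeta,\varphi)-\zeta s-\varphi(w)+\delta$, and therefore $J(s,w)\ge-\mathcal{L}(C)\ge\zeta s+\varphi(w)-\Lambda(\zeta,\varphi)-\delta$. Sending $\delta\downarrow0$ and then optimizing over $(\zeta,\varphi)$ gives $J(s,w)\ge\Lambda^\star(s,w)$; the degenerate choice $(\zeta,\varphi)=(0,0)$ only reproduces the trivial bound $J\ge0$, which also follows from $\mathcal{L}\le0$.

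For the reverse inequality I would first reduce matters to a local lower bound on $\mu_n$. Let $(s,w)$ be given and let $C$ be any open convex subset of $\Rl\times\rew$ containing $(s,w)$; choosing $\delta>0$ with $B_{(s,w),\delta}\subseteq C$, where $B_{(s,w),\delta}$ denotes the (convex) open ball of radius $\delta$, monotonicity of $\mu_n$ gives $\mathcal{L}(C)\ge\mathcal{L}(B_{(s,w),\delta})$, and lemma \ref{start} identifies $\mathcal{L}(B_{(s,w),\delta})$ with $\lim_{n\uparrow\infty}(1/n)\ln\mu_n(B_{(s,w),\delta})$. Thus it is enough to prove that
\begin{equation*}
\liminf_{n\uparrow\infty}\frac{1}{n}\ln\mu_n\big(B_{(s,w),\delta}\big)\ge-\Lambda^\star(s,w)
\end{equation*}
for every $\delta>0$ and every $(s,w)$ with $\Lambda^\star(s,w)<+\infty$, since then $-\mathcal{L}(C)\le\Lambda^\star(s,w)$ for every admissible $C$ and hence $J(s,w)\le\Lambda^\star(s,w)$. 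This local bound is the hard half of Cram\'er's theorem, and I would obtain it by the usual change of measure: pick $(\zeta,\varphi)$ in the interior of the effective domain of $\Lambda$ whose value $\zeta s+\varphi(w)-\Lambda(\zeta,\varphi)$ approximates $\Lambda^\star(s,w)$, tilt the law of $(S_1,X_1)$ by the probability density $e^{\zeta S_1+\varphi(X_1)-\Lambda(\zeta,\varphi)}$ — under which $(S_1,X_1)$ has all polynomial moments and mean $\nabla\Lambda(\zeta,\varphi)$ near $(s,w)$ — apply the strong law of large numbers in the separable Banach space $\Rl\times\rew$ \cite{Talagrand} to see that the tilted empirical mean of $n$ i.i.d.\ copies concentrates near $(s,w)$, and undo the tilt on the event that the empirical mean lies in $B_{(s,w),\delta}$, where $\zeta s'+\varphi(w')$ deviates from $\zeta s+\varphi(w)$ by at most $(|\zeta|+\|\varphi\|)\delta$. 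Letting the approximation improve and then $\delta\downarrow0$ recovers the displayed bound.

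The main obstacle is precisely this last step when $(s,w)$ lies on the boundary of the effective domain of $\Lambda^\star$, or when the supremum defining $\Lambda^\star(s,w)$ is not attained: then no tilt centers exactly at $(s,w)$ and one cannot appeal to differentiability of $\Lambda$, so the argument must proceed by approximation and truncation without invoking any exponential moment hypothesis. Handling this is exactly the sharp version of Cram\'er's theorem due to Bahadur and Zabell \cite{BaZa}; concretely, Proposition \ref{Jesplicita} is the identification of the abstract Cram\'er rate function with the Fenchel--Legendre transform recorded in \cite{DemboBook}, applied to the i.i.d.\ sequence $\{(S_i,X_i)\}_{i\ge1}$ with values in $\Rl\times\rew$, which one may simply invoke.
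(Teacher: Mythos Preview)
Your proposal is correct and ultimately coincides with the paper's own proof: the paper simply cites theorem 6.1.3 of \cite{DemboBook}, and you end at the very same invocation after supplying the easy Chernoff direction and a sketch of the tilting argument. The extra detail you give for $J\ge\Lambda^\star$ is fine, and you correctly flag that the hard direction in full generality (no exponential moments, boundary points of the effective domain, supremum not attained) is precisely the Bahadur--Zabell refinement recorded in \cite{DemboBook}, which is what both you and the paper invoke.
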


\begin{proof}
See theorem 6.1.3 of \cite{DemboBook}.
\end{proof}

We conclude the appendix with a result about certain convex sets that are
met in the proof of theorem \ref{mainth}.
\begin{lemma}
\label{lemma_conv}
Let $\alpha<\beta$ be two real numbers and let $C\subseteq\rew$ be
open convex, closed convex, or any convex set in $\Brew$ when $\rew$
is finite-dimensional.  Then, for all $n\ge 1$
\begin{equation*}
  \frac{1}{n}\ln \mu_n\big([\alpha,\beta]\times C\big)
  \le-\inf_{(s,w)\in[\alpha,\beta]\times C}\big\{J(s,w)\big\}.
\end{equation*}
\end{lemma}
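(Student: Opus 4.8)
\textbf{Proof proposal for Lemma \ref{lemma_conv}.}

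The plan is to reduce the assertion to the general upper bound of part $(iii)$ of Proposition \ref{WLDP}. That proposition already gives the inequality for compact sets, so the first task is to produce, for each of the three cases on $C$, a compact set sandwiched appropriately. More precisely, for any fixed $n\ge 1$ I would argue that
\begin{equation*}
  \frac{1}{n}\ln\mu_n\big([\alpha,\beta]\times C\big)\le\mathcal{L}\big([\alpha,\beta]\times C\big),
\end{equation*}
which holds because $[\alpha,\beta]\times C$ is convex (being the product of two convex sets) and $\ln\mu_n$ is super-additive in $n$, so the averaged sequence is bounded above by its supremum $\mathcal{L}([\alpha,\beta]\times C)$. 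The remaining point is to bound $\mathcal{L}([\alpha,\beta]\times C)$ from above by $-\inf_{(s,w)\in[\alpha,\beta]\times C}\{J(s,w)\}$.

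The mechanism for that is exactly the definition of $J$ via open convex sets together with Lemma \ref{start}. Here is where the three cases diverge. If $C$ is open, then $[\alpha-\varepsilon,\beta+\varepsilon]^{\circ}\times C=(\alpha-\varepsilon,\beta+\varepsilon)\times C$ is open and convex and contains $[\alpha,\beta]\times C$; by Lemma \ref{start} the limit $\lim_n(1/n)\ln\mu_n$ of this enlarged set equals $\mathcal{L}$ of it, and monotonicity of $\mathcal{L}$ plus the definition of $J$ at each point $(s,w)\in[\alpha,\beta]\times C$ (using a slightly larger open convex neighbourhood) gives the bound; letting $\varepsilon\downarrow 0$ and taking the infimum over $(s,w)$ finishes this case. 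If $C$ is closed convex, I would instead approximate from outside by the open convex sets $C_\varepsilon:=\{w:\mathrm{dist}(w,C)<\varepsilon\}$ and $(\alpha-\varepsilon,\beta+\varepsilon)$: these are open convex, $\mu_n([\alpha,\beta]\times C)\le\mu_n((\alpha-\varepsilon,\beta+\varepsilon)\times C_\varepsilon)$, Lemma \ref{start} applies to the enlargement, and then $-\inf_{(s,w)\in(\alpha-\varepsilon,\beta+\varepsilon)\times C_\varepsilon}\{J(s,w)\}$ converges up to $-\inf_{(s,w)\in[\alpha,\beta]\times C}\{J(s,w)\}$ as $\varepsilon\downarrow 0$ by lower semicontinuity of $J$ and a standard argument on nested closed level sets. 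Finally, in the finite-dimensional case with $C$ an arbitrary convex Borel set, I would use that $\mu_n([\alpha,\beta]\times C)\le\mu_n([\alpha,\beta]\times\mathrm{cl}\,C)$ and $\inf_{[\alpha,\beta]\times\mathrm{cl}\,C}J=\inf_{[\alpha,\beta]\times C}J$; the latter equality holds in finite dimensions because $J$, being convex, is continuous on the interior of its domain and every point of $\mathrm{cl}\,C$ is approached by points of $\mathrm{ri}\,C\subseteq C$ along which the value of $J$ cannot jump up (the standard fact that $\inf$ over a convex set equals $\inf$ over its relative interior for convex functions), thereby reducing to the closed convex case.

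The main obstacle I anticipate is the closed convex case in the infinite-dimensional setting: one must be careful that the outer $\varepsilon$-neighbourhoods $C_\varepsilon$ are genuinely convex (they are, as $\varepsilon$-fattenings of convex sets) and open, and that the passage $\varepsilon\downarrow 0$ does not lose mass in the $J$-infimum. The cleanest way is to fix $\lambda<\inf_{(s,w)\in[\alpha,\beta]\times C}\{J(s,w)\}$; since the sublevel set $\{J\le\lambda\}$ is closed and disjoint from the compact-in-the-relevant-factors set $[\alpha,\beta]\times C$ — or, absent compactness, one argues by a separating-hyperplane/neighbourhood-basis argument — there is $\varepsilon>0$ with $\{J\le\lambda\}\cap\big((\alpha-\varepsilon,\beta+\varepsilon)\times C_\varepsilon\big)=\varnothing$, whence $\mathcal{L}\big((\alpha-\varepsilon,\beta+\varepsilon)\times C_\varepsilon\big)\le-\lambda$ by the definition of $J$ and Lemma \ref{start}; combining with the super-additivity bound yields $(1/n)\ln\mu_n([\alpha,\beta]\times C)\le-\lambda$, and sending $\lambda$ up to the infimum completes the proof. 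One should double-check that the case $\inf_{[\alpha,\beta]\times C}\{J\}=+\infty$ (and the case $C=\varnothing$) is handled trivially, and that $[\alpha,\beta]$ being closed rather than open is harmless since we always enlarge to $(\alpha-\varepsilon,\beta+\varepsilon)$.
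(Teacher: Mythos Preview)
Your outer-approximation strategy has a genuine gap at the key step. You want to enlarge $[\alpha,\beta]\times C$ to an open convex set $U$ (either $(\alpha-\varepsilon,\beta+\varepsilon)\times C$ or $(\alpha-\varepsilon,\beta+\varepsilon)\times C_\varepsilon$), establish $\inf_U J>\lambda$, and then conclude $\mathcal{L}(U)\le-\lambda$ ``by the definition of $J$ and Lemma~\ref{start}''. But the definition of $J$ gives the \emph{opposite} inequality: for any open convex $U$ containing $(s,w)$ one has $-J(s,w)=\inf_V\mathcal{L}(V)\le\mathcal{L}(U)$, hence $\mathcal{L}(U)\ge-\inf_{U}J$. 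Lemma~\ref{start} only identifies the limit with the supremum $\mathcal{L}(U)$; it says nothing about the value of $\mathcal{L}(U)$ in terms of $J$. The sole upper bound available in the appendix is Proposition~\ref{WLDP}$(iii)$, and that is stated only for \emph{compact} sets. So the inference ``whence $\mathcal{L}(U)\le-\lambda$'' is unjustified, and with it each of your three cases collapses---in particular, the closed-convex case inherits the same circularity even before one worries about whether two disjoint closed sets in an infinite-dimensional space are a positive distance apart.

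The paper avoids this by approximating from the \emph{inside}. Fixing $N$ with $\mathcal{L}([\alpha,\beta]\times C)\le(1/N)\ln\mu_N([\alpha,\beta]\times C)+\epsilon$, tightness of the finite measure $A\mapsto\mu_N([\alpha,\beta]\times A)$ on the Polish space $\rew$ yields a compact $K_o\subseteq C$ carrying almost all the mass. The three hypotheses on $C$ are used precisely to manufacture a compact \emph{convex} $K$ with $K_o\subseteq K\subseteq C$ (convex hull in finite dimensions; closed convex hull when $C$ is closed; a finite union of compact convex pieces when $C$ is open). One then has both convexity, so that super-additivity gives $\mathcal{L}([\alpha,\beta]\times K)=\limsup_n(1/n)\ln\mu_n([\alpha,\beta]\times K)$, and compactness, so that Proposition~\ref{WLDP}$(iii)$ bounds this by $-\inf_{[\alpha,\beta]\times K}J\le-\inf_{[\alpha,\beta]\times C}J$. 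The inner route is essential because compactness is the only handle on the upper bound.
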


\begin{proof}
Recalling the definition of $\mathcal{L}$, we show that 
\begin{equation*}
\mathcal{L}([\alpha,\beta]\times C)\le-\inf_{(s,w)\in[\alpha,\beta]
\times C}\{J(s,w)\}.
\end{equation*} 
Assume $\mathcal{L}([\alpha,\beta]\times C)>-\infty$, otherwise there
is nothing to prove, and pick $\epsilon>0$. By definition, there
exists an integer $N\ge 1$ such that $\mathcal{L}([\alpha,\beta]\times
C)\le(1/N)\ln\mu_N([\alpha,\beta]\times C)+\epsilon$.  Notice that we
must have $\mu_N([\alpha,\beta]\times C)>0$.  Completeness and
separability of $\rew$ entail that the measure that associates any
$A\in\Brew$ with $\mu_N([\alpha,\beta]\times A)$ is tight (see
\cite{Bogachevbook}, theorem 7.1.7).  Consequently, a compact set
$K_o\subseteq C$ can be found so that $\mu_N([\alpha,\beta]\times
C)\le \mu_N([\alpha,\beta]\times K_o)+[1-\exp(-\epsilon
  N)]\mu_N([\alpha,\beta]\times C)$.  Thus,
$\mu_N([\alpha,\beta]\times C)\le\exp(\epsilon
N)\mu_N([\alpha,\beta]\times K_o)$ and
$\mathcal{L}([\alpha,\beta]\times
C)\le(1/N)\ln\mu_N([\alpha,\beta]\times K_o)+2\epsilon$ follows.  We
shall show in a moment that there exists a compact convex set $K$ with
the property that $K_o\subseteq K\subseteq C$.  Then, using the fact
that $K_o\subseteq K$ we reach the further bound
$\mathcal{L}([\alpha,\beta]\times
C)\le(1/N)\ln\mu_N([\alpha,\beta]\times
K)+2\epsilon\le\mathcal{L}([\alpha,\beta]\times K)+2\epsilon$.  At
this point, we observe that on the one hand
$\mathcal{L}([\alpha,\beta]\times
K)=\limsup_{n\uparrow\infty}(1/n)\ln\mu_n([\alpha,\beta]\times K)$ by
super-additivity as $K$ is convex, and on the other hand
$\limsup_{n\uparrow\infty}(1/n)\ln\mu_n([\alpha,\beta]\times
K)\le-\inf_{(s,w)\in[\alpha,\beta]\times K}\{J(s,w)\}$ by part $(ii)$
of proposition \ref{WLDP} as $K$ is compact. Thus,
$\mathcal{L}([\alpha,\beta]\times
C)\le-\inf_{(s,w)\in[\alpha,\beta]\times
  K}\{J(s,w)\}+2\epsilon\le-\inf_{(s,w)\in[\alpha,\beta]\times
  C}\{J(s,w)\}+2\epsilon$ because $K\subseteq C$ and the lemma follows
from the arbitrariness of $\epsilon$.

Let us prove at last that there exists a compact convex set $K$ with
the property that $K_o\subseteq K\subseteq C$.  The hypothesis that
the convex set $C$ is either open or closed when $\rew$ if
infinite-dimensional comes into play here.  Let $C_o$ be the convex
hull of the compact set $K_o\subseteq C$ and denote the closure of a
set $A$ by $\cl A$. The set $C_o$ is convex and compact when $\rew$ is
finite-dimensional, whereas $\cl C_o$ is convex and compact even when
$\rew$ is infinite-dimensional (see \cite{Rudin}, theorem
3.20). Clearly, $K_o\subseteq C_o\subseteq C$.  If $\rew$ is
finite-dimensional, then the problem to find $K$ is solved by
$K=C_o$. If $\rew$ is infinite-dimensional and $C$ is closed, then the
problem is solved by $K=\cl C_o$. Some more effort is needed when
$\rew$ is infinite-dimensional and $C$ is open. Assume that $C$ is
open and for each $w\in C$ let $\delta_w>0$ be such that $\cl
B_{w,\delta_w}\subseteq C$. As $K_o$ is compact, there exist finitely
many points $w_1,\ldots,w_n$ in $K_o$ so that
$K_o\subseteq\cup_{i=1}^nB_{w_i,\delta_{w_i}}$. Let $K$ be the convex
hull of $\cup_{i=1}^n(\cl B_{w_i,\delta_{w_i}}\cap \cl C_o)$, which
contains $K_o$. The set $K$ is convex and compact since it is the
convex hull of the union of the compact convex sets $\cl
B_{w_1,\delta_{w_1}}\cap \cl C_o,\ldots,\cl B_{w_n,\delta_{w_n}}\cap
\cl C_o$ (see \cite{Rudin}, theorem 3.20). On the other hand, we have
$K\subseteq C$ because $\cup_{i=1}^n(\cl B_{w_i,\delta_{w_i}}\cap \cl
C_o)\subseteq\cup_{i=1}^n\cl B_{w_i,\delta_{w_i}}\subseteq C$.
\end{proof}

%\section*{References}

%\bibliography{mybibfile}

\end{document}